\newcommand{\Ara}[2][]{\ifthenelse{\isempty{#1}}{\widetilde{\Ar}_{#2}}{\widetilde{\Ar}_{#2,#1}}}
\newcommand{\PicTilde}[1]{\widetilde{\Pic}^0_{#1}}
\def\AbsNm{|\!\Nm\!|}
\def\Kbar{\bar{K}}
\DeclareMathOperator{\comp}{c}
\def\centre{Z}
\def\involGrothen{{\rm j}}
\begin{document}

\title[Arakelov class groups of random number fields]
{Arakelov class groups \\ of random number fields}
\author{Alex Bartel$^1$}
\address{$^1$School of Mathematics and Statistics, University of Glasgow,
University Place, Glasgow G12 8SQ, United Kingdom}
\author{Henri Johnston$^2$}
\address{$^2$Department of Mathematics and Statistics, University of Exeter, Exeter EX4 4QF, United Kingdom}
\author{Hendrik W. Lenstra Jr.$^3$}
\address{$^3$Mathematisch Instituut, Universiteit Leiden, Postbus 9512, 2300 RA Leiden, The Netherlands}

\email{alex.bartel@glasgow.ac.uk, h.johnston@exeter.ac.uk}
\email{hwl@math.leidenuniv.nl}

\date{\today}

\begin{abstract}
The main purpose of the paper is to formulate a probabilistic model
for Arakelov class groups in families of number fields, offering a correction
to the Cohen--Lenstra--Martinet heuristic on ideal class groups.
To that end, we show that Chinburg’s $\Omega(3)$ conjecture implies
tight restrictions on the Galois module structure of oriented Arakelov class
groups. As a consequence, we construct a new infinite series of counterexamples
to the Cohen--Lenstra--Martinet heuristic, which have the
novel feature that their Galois groups are non-abelian.
\end{abstract}
\maketitle
\section{Introduction}\noindent

It has been an area of active research over the past few decades to understand
the distribution of class groups $\Cl_F$ of ``random'' algebraic number fields $F$.
Specifically, we let $K$ be a number field, and let $G$ be a finite group.
Let $\Lambda$ be the quotient of the group ring $\Z[\tfrac{1}{2\cdot \#G}][G]$
by the two-sided ideal generated by $\sum_{g\in G}g$. One studies the behaviour
of the $\Lambda$-module $\Lambda\otimes_{\Z[G]}\Cl_F$, as $F$ runs over a ``natural''
family $\cF$ of $G$-extensions of $K$.

An equally classical invariant that one attaches to $F$ is the
unit group $\cO_F^\times$ of the ring of integers $\cO_F$ of $F$,
viewed as a $\Z[G]$-module. Its isomorphism class, unlike that of
the class group, has only finitely many possibilities, as $F$ ranges
over $\cF$. The statistical properties of $\cO_F^\times$
have, however, been much less extensively studied.

In the present paper we make the case that, in this context, $\Cl_F$ and
$\cO_F^\times$ are most naturally studied in combination, since their distributions
need, by all appearances, not be independent.
Their dependence is best expressed by means of the \emph{Arakelov class group}.
It is a compact abelian group
attached to $F$, and we will recall its definition in Section \ref{sec:prelims}.
For number fields it plays the r\^ole that the Jacobian of a curve plays for
function fields over finite fields. It can be broken up into two pieces, one
being $\Cl_F$ and the other coming from $\cO_F^\times$, but in several ways
it is better than the sum of its parts. We find it convenient
to replace the Arakelov class group by its Pontryagin dual $\Ar_F$. This is a
finitely generated abelian group that fits into a short exact sequence
\[
0 \longrightarrow \Hom(\Cl_F,\Q/\Z) \longrightarrow \Ar_F \longrightarrow \Hom(\cO_F^\times,\Z) \longrightarrow 0.
\]
In other words, the torsion subgroup of $\Ar_F$ is the Pontryagin dual of the
class group, and its torsion-free part is the $\Z$-linear dual of
the unit group. This exact sequence, being canonically associated with $F$,
is an exact sequence of $\Z[G]$-modules.

Let $\G_0(\Lambda)$ denote the Grothendieck group of the category of 
finitely generated $\Lambda$-modules; see Section \ref{subsec:Groth-group-rings}
for the definition. 
Let $F/K$ be a Galois extension with Galois group $G$, let $S_{\infty}$ be the
$G$-set of Archimedean places of $F$, and let
$\Z^{S_\infty}$ be the corresponding permutation module over $\Z[G]$;
it is a property of $\cF$ that the isomorphism class of the $G$-set $S_{\infty}$
is independent of $F$ when $F \in \cF$.
The difference of the classes
$[\Lambda\otimes_{\Z[G]}\Ar_F]$ and $[\Lambda\otimes_{\Z[G]} \Z^{S_\infty}]$
in $\G_{0}(\Lambda)$ lies in the torsion subgroup $\G_{0}(\Lambda)_{\tors}$ of $\G_{0}(\Lambda)$, 
as can easily be deduced from Lemma \ref{lem:equiv-assertions-mc-torsion}.
This torsion subgroup is a finite abelian group, which can be thought of as a
``class group'' of $\Lambda$. 
The following result will be proven at the end of Section
\ref{sec:Chinburg} as a consequence of Proposition \ref{prop:omega-3-over-max-order-implies-conj:main}.

\begin{theorem}\label{thm:main}
  With the notation just introduced,
  suppose that Chinburg's $\Omega(3)$ conjecture, Conjecture \ref{conj:omega-3},
  holds for $F/K$. 
  Suppose, moreover, that for every prime number $p$ not dividing $2\cdot\#G$, each primitive $p$-th root of unity in $F$ is in $K$.
  Then the equality
  \[
    [\Lambda\otimes_{\Z[G]}\Ar_F] - [\Lambda\otimes_{\Z[G]} \Z^{S_\infty}] = 0
  \]
  holds in $\G_{0}(\Lambda)_{\tors}$.
\end{theorem}\noindent

In fact, we will prove the conclusion of Theorem \ref{thm:main} under a weaker hypothesis;
see Theorem \ref{thm:mainStronger}.

In Section \ref{sec:CL} we will define the families $\cF$ that we consider.
Together, Theorem \ref{thm:main} and Propositions \ref{prop:ZSinfty-constant} and \ref{prop:Ara}
imply that as $F$ ranges over $\cF$, the class
$[\Lambda\otimes_{\Z[G]}\cO_F^\times] - [\Lambda\otimes_{\Z[G]}\Cl_F]$
in $\G_0(\Lambda)$ is conjectured to be constant.
We have no reason to expect this to be true for either of the two terms individually.

There are cases in which the conclusion of Theorem \ref{thm:main} can be proven
unconditionally. This includes Galois extensions of $\Q$ of degree less than $112$
(see Proposition \ref{prop:112}), but also, more interestingly, a class of fields
that can be used to construct a new series of counterexamples to the
Cohen--Lenstra--Martinet heuristic \cite{MR756082,MR1037430} on class groups of
number fields. Informally, this heuristic reads as follows.

\begin{heuristic}[vague version]\label{he:introorig} Let $F$ vary in a natural
  family of Galois number fields. Then the Galois module $\Ar_F$, after inverting
  the ``bad'' prime numbers, behaves ``randomly'' with respect to a probability distribution
  that assigns to a suitable Galois module $M$ a probability weight that is inversely
  proportional to the ``size'' of the automorphism group of $M$.
\end{heuristic}\noindent
The precise version that we shall use will be formulated as Heuristic \ref{he:orig}
in Section~\ref{sec:CL}. The same section explains what we mean by the term ``natural
family''. It also allows the ring $\Lambda$ to be more general than the
ring considered above.

Heuristic \ref{he:orig} represents, in several respects, a corrected version of
the original Cohen--Lenstra--Martinet heuristic. Nevertheless, it is known to
be invalid, counterexamples for certain abelian Galois groups $G$ having been
provided in \cite[Theorem 1.1, Proposition 4.4]{MR4105790}. One of the main
results of the present paper is a new series of counterexamples, this time with
non-abelian~$G$. The other main achievement is a proposed correction to the heuristic.

The new counterexamples make use of groups $G$ of order $2^p\cdot p$, where $p$
is an odd prime number. Their abelianisations $G/G'$, which are cyclic of order $2p$,
coincide with the groups on which the abelian counterexamples in \cite{MR4105790}
depend. Our groups have centres $Z$ of order $2$, and writing $Z = \langle \gamma\rangle$,
we shall make use of the ring $\Lambda = \Z[\tfrac{1}{2p}][G]/(1+\gamma)$. In Section \ref{sec:disproof}
we prove the following theorem.

\begin{theorem}\label{thm:introdisproof}
For infinitely many odd prime numbers $p$ there is a group $G$ with the
properties just listed such that the following is true. With $\Lambda$ as just
defined, the group $\G_0(\Lambda)_{\tors}$ is non-trivial, whereas there does exist a natural
family of $G$-extensions of $\Q$ such that for all members $F$ of the family the
class of $\Lambda\otimes_{\Z[G]} \Cl_F$ in $\G_0(\Lambda)_{\tors}$ is trivial.
\end{theorem}\noindent
The families in Theorem \ref{thm:introdisproof} necessarily violate
Heuristic \ref{he:orig}, since the latter
would imply equidistribution of $\Lambda\otimes_{\Z[G]} \Cl_F$ in $\G_0(\Lambda)_{\tors}$
as $F$ ranges over the family. This is proven in Section \ref{sec:disproof},
to which we also refer for more information on the groups and families appearing
in Theorem \ref{thm:introdisproof}.

The probability weight that is
inversely proportional to the ``size'' of the automorphism group, as referred to
in Heuristic \ref{he:introorig}, reflects an
attractive feature of the Arakelov class groups. The general principle behind many
heuristics is that algebraic objects in natural families tend to be ``as random as they can be'',
with respect to a probability distribution that assigns to an algebraic object $X$
a probability weight that is proportional to $1/\# \Aut X$. The original
Cohen--Lenstra--Martinet heuristic did, initially, look like an exception to the
rule just mentioned, but this changed when it was reformulated in terms of $\Ar_F$.
This is discussed in more detail in \cite{MR4105790}; see also Section \ref{sec:CL} below.

Theorem \ref{thm:main} restricts how random Arakelov class groups can be. We propose
a correction to Heuristic \ref{he:introorig} that takes this restriction into account.

\begin{heuristic}[vague version]\label{he:intronew}
  Let $F$ vary in a natural family of Galois number fields. Then the Galois module
  $\Ar_F$, after inverting the ``bad'' prime numbers,
  behaves ``randomly'' with respect to a probability distribution that assigns
  to a suitable Galois module $M$ a probability weight that is inversely proportional
  to the ``size'' of the automorphism group of $M$, restricted to those modules
  that satisfy equation \eqref{eq:ArAndArtilde} with
  $M$ in place of $\Ar_F$.
\end{heuristic}\noindent
A precise version of Heuristic \ref{he:intronew} will be formulated as Heuristic \ref{he:main} in Section~\ref{sec:CL}.

Note that the only difference between Heuristics \ref{he:introorig} and \ref{he:intronew}
is the reference to \eqref{eq:ArAndArtilde}. It expresses
the restriction that Chinburg's $\Omega(3)$ conjecture imposes on the class of $\Ar_F$
in the class group of $\Lambda$ as a consequence of Theorem \ref{thm:main}. 
It is also important to point out in which way Heuristic \ref{he:intronew} differs from Conjecture 1.5 formulated in \cite{MR4105790}.
The latter conjecture is only concerned with the local structure of $\Ar_F$ at a \emph{finite} set of prime numbers;
in that case, the class group of $\Lambda$ is trivial, so that Chinburg's $\Omega(3)$ conjecture
imposes no restriction. On the other hand, Heuristic \ref{he:intronew} considers almost
\emph{all} prime numbers, and 
in this global situation $\Lambda$ may have a non-trivial class group.
In particular, one should be able to extract explicit information on the distribution
of $\cO_F^\times$ as a Galois module from Heuristic \ref{he:intronew}, which is not possible
with Conjecture 1.5 of \cite{MR4105790}.

The structure of the paper is as follows. After some preliminaries in Section \ref{sec:prelims},
we formulate in Section \ref{sec:CL} the old and the new heuristics. Some basic material
on Grothendieck groups of orders is treated in Section~\ref{sec:duals}. Section~\ref{sec:Arakelov}
is devoted to Arakelov class groups as Galois modules, and Section~\ref{sec:Chinburg} to the
implications of Chinburg's $\Omega(3)$ conjecture for these Galois modules. 
In Section~\ref{sec:known}
we collect some cases in which the conclusion of Theorem \ref{thm:main} is known,
and use these in Section~\ref{sec:disproof}
to construct a new series of counterexamples to the Cohen--Lenstra--Martinet heuristic.
\begin{acknowledgements}
We would like to thank Ted Chinburg, Aurel Page, and Peter Stevenhagen for useful
conversations, and Andreas Nickel and an anonymous referee for helpful comments
on earlier drafts of the manuscript. The first named author gratefully acknowledges
financial support through EPSRC Fellowship EP/P019188/1, `Cohen--Lenstra heuristics,
Brauer relations, and low-dimensional manifolds'.
The second named author gratefully acknowledges financial support through EPSRC First
Grant EP/N005716/1 `Equivariant Conjectures in Arithmetic'.
\end{acknowledgements}

\section{Preliminaries}\label{sec:prelims}\noindent
In this section we recall material that we will use for the formulation of Heuristic \ref{he:main}. 
In particular, we recall from \cite{Schoof} the definition of the Arakelov
class group and of the oriented Arakelov class group of a number field.

\subsection{Pontryagin duality}\label{subsec:Pontryagin}
We briefly recall some facts on Pontryagin duality and refer the reader to
\cite[Chapter 1, \S 1]{MR2392026} for a more detailed overview. If $A$ and $B$
are abelian topological groups, then $\Hom_{\cts}(A,B)$ denotes the group of continuous
group homomorphisms from $A$ to $B$. Let $\cC$
be the category of Hausdorff locally compact abelian topological groups.
If $A$ is an object of $\cC$, then its Pontryagin dual is defined to 
be $\Hom_{\cts}(A,\R/\Z)$. This defines an anti-equivalence
of $\cC$ with itself, of which the square is isomorphic to the identity
functor. It induces an anti-equivalence between the full subcategories of
compact abelian groups and of discrete abelian groups. If $M$ is a finitely
generated abelian group, then $\Hom_{\cts}(M \otimes_{\Z} (\R/\Z),\R/\Z)$
is canonically isomorphic to $\Hom(M,\Z)$. 
In particular, we have $\Hom_{\cts}(\R/\Z,\R/\Z) \cong \Z$.

\subsection{The (oriented) Arakelov class group}\label{sec:Arak}
Let $F$ be a number field.
Let $\Id_F$ be the group of fractional ideals of the ring of integers $\cO_F$ of $F$,
let $S_{\infty}$ denote the set of Archimedean places of $F$, and let $F_\R$
denote the \'etale $\R$-algebra $F\otimes_{\Q}\R=\prod_{w\in S_{\infty}}F_w$,
where $F_w$ denotes the completion of $F$ at $w$. We have canonical maps
$\fN\colon \Id_F\to \R_{>0}$ and $\AbsNm\colon F_{\R}^\times\to \R_{>0}$, the first given
by the ideal norm, and the second given by the absolute value of the $\R$-algebra
norm. Let $\Id_F\times_{\R_{>0}} F_{\R}^\times$ denote the fibre product with
respect to these maps. The \emph{oriented Arakelov class group} $\PicTilde{F}$
of $F$ is defined as the cokernel of the map $F^\times\to \Id_F\times_{\R_{>0}} F_{\R}^\times$
that sends $\alpha\in F^\times$ to $(\alpha\cO_F,\alpha)$.
It follows from Dirichlet's unit theorem and the finiteness of the
class group of $\cO_F$, that this is a compact abelian group.

For every $w\in S_{\infty}$ we have a direct product decomposition
$F_{w}^\times\cong \R_{>0} \times \comp(F_w^\times)$, where $\comp(F_w^\times)$
is the maximal compact subgroup of $F_w^\times$, which is equal to $\{\pm 1\}$
if $w$ is real, and to the circle group in $F_w$ if $w$ is complex. The maximal
compact subgroup $\comp(F_{\R}^\times)=\prod_{w\in S_{\infty}}\comp(F_w^\times)$
of $F_{\R}^\times$ is contained in the kernel of the map $\AbsNm$. Define the
\emph{Arakelov class group} $\Pic^0_F$ of $F$ to be the quotient of
$\PicTilde{F}$ by the image of
$\{1\}\times \comp(F_{\R}^\times)\subset \Id_F\times_{\R_{>0}}F_{\R}^\times$ in
$\PicTilde{F}$.

As in the introduction, we write $\Ar_F$ for the Pontryagin dual of
$\Pic_F^0$, and we write $\Ara{F}$ for the Pontryagin dual of $\PicTilde{F}$.
Let $\mu_{F}$ denote the group of roots of unity in $F$. 
Note  that $\Ar_F$, $\Ara{F}$, and $\mu_{F}$ are all
finitely generated left $\Z[\Aut F]$-modules; for $\Ar_F$ this will follow
from the next result, and for $\Ara{F}$ this follows from \cite[Proposition 5.3]{Schoof}.

\begin{proposition}\label{prop:SES-for-AF}
There is a short exact sequence  
\[
0 \longrightarrow \Hom(\Cl_F,\Q/\Z) \longrightarrow \Ar_F \longrightarrow \Hom(\cO_F^\times / \mu_{F},\Z) \longrightarrow 0.
\]
\end{proposition}

\begin{proof}
By \cite[Proposition 2.2]{Schoof} there is an exact sequence
\[
  0 \longrightarrow (\cO_{F}^\times/\mu_{F})\otimes_{\Z}(\R/\Z) \longrightarrow \Pic_{F}^{0} \longrightarrow \Cl_{F} \longrightarrow 0. 
\] 
The desired result follows by taking the Pontryagin dual of this sequence
and recalling from Section \ref{subsec:Pontryagin} 
that we have an isomorphism $\Hom_{\cts}((\cO_{F}^\times/\mu_{F})\otimes_{\Z}(\R/\Z),\R/\Z) \cong \Hom(\cO_F^\times/\mu_F,\Z)$.
\end{proof}

\begin{remark}\label{rmk:similar-SES}
Note that the canonical map 
$\Hom(\cO_F^\times / \mu_{F},\Z)\to \Hom(\cO_F^\times,\Z)$
is an isomorphism.
\end{remark}

\subsection{Modules and Grothendieck groups}\label{subsec:Groth-group-rings}
Henceforth all modules will be assumed to be left modules unless stated otherwise. 
If $G$ is a group, $S$ is a finite $G$-set, and $R$ is a ring, in the remainder
of the paper $R^S$ will denote the free $R$-module on the set $S$ with the
induced $R$-linear $R[G]$-action. We will refer to such $R[G]$-modules as
\emph{permutation modules}.

Recall that for a ring $T$, the Grothendieck group $\G_{0}(T)$ of the category of 
finitely generated $T$-modules is the additive group generated 
by expressions $[M]$, one for each isomorphism class of finitely generated 
$T$-modules $M$, with a relation $[L]+[N]=[M]$ whenever there exists a short 
exact sequence 
\[
0 \longrightarrow L \longrightarrow M \longrightarrow N \longrightarrow 0
\]
of finitely generated $T$-modules.

If $P$ is a set of prime numbers, then we define
\[
\textstyle{\Z_{(P)}=\{ a/b : a, b\in \Z, b \not \in \bigcup_{p\in P\cup\{0\}}p\Z \}.}
\]

If $T\to T'$ is a ring homomorphism such that $T'$ is a flat right $T$-module,
then the functor $T' \otimes_T \bullet$ from the category of finitely generated $T$-modules
to that of finitely generated $T'$-modules
induces a group homomorphism $\G_0(T)\to \G_0(T')$. The following two examples
of this construction will be relevant to us. If $G$ is a finite group and $P$
is a set of prime numbers, then the flat localisation map $\Z\to \Z_{(P)}$
induces a group homomorphism $\G_0(\Z[G])\to \G_0(\Z_{(P)}[G])$.
Moreover, if we have a direct product decomposition $T\cong U\times W$ of rings, then
the right $T$-module $W$ is projective, and in particular flat, so the quotient
map $T\to W$ induces a group homomorphism $\G_0(T)\to \G_0(W)$.

\section{Cohen--Lenstra--Martinet heuristic}\label{sec:CL}\noindent
In this section we propose a correction of the Cohen--Lenstra--Martinet
heuristic \cite{MR756082,MR1037430}.
The notation and assumptions introduced in the next three paragraphs
will remain in force throughout this section.

Let $G$ be a finite group, let $P$ be a set of prime numbers not dividing
$2\cdot \#G$, let the $\Q$-algebra $A$ be a quotient of $\Q[G]$ by a two-sided ideal containing
$\sum_{g\in G}g$, and let $\Lambda$ be the image of $\Z_{(P)}[G]$ in $A$;
note that the ring $\Lambda$ in the introduction is a special case of this. 
Next, let $V$ be a finitely generated $\Q[G]$-module, and, for brevity, set
$V_A=A\otimes_{\Q[G]}V$. Let $\cM_V$ be a set of finitely generated
$\Lambda$-modules $M$ that satisfy $A\otimes_{\Lambda} M \cong_A V_A$, and
with the property that for every finitely generated $\Lambda$-module $M'$
satisfying $A\otimes_{\Lambda} M'\cong_A V_A$ there exists a unique $M\in \cM_V$
such that $M'\cong M$. Note that the set $\cM_V$ is countable.
If $M$ is a finitely generated $\Lambda$-module satisfying
$A\otimes_\Lambda M\cong_A V_A$, and $f$ is a function defined on $\cM_V$,
then we write $f(M)$ for the value of $f$ on the unique element of $\cM_V$
that is isomorphic to $M$. 
In \cite{MR3705226} it was shown that there is a unique ``automorphism index''
function $\ia\colon \cM_V\times\cM_V\to \Q_{>0}$ that behaves, in a precise
sense explained in \cite[Theorem 1.1]{MR3705226}, like
$(L,M)\mapsto \frac{\#\Aut M}{\#\Aut L}$, even when the automorphism groups
of $M$ and of $L$ are infinite. Fix $M\in \cM_V$. If $\cN$ is a subset of $\cM_V$
and $X$ is a positive real number, let $\cN_X$ be the finite set of all $L\in \cN$
whose torsion subgroup $L_{\tors}$ has order less than $X$. For $\cN\subset \cM_V$ and for a
function $f\colon \cN\to \C$, define the expected value of $f$ on $\cN$ by
\[ 
  \bE_{\cN}(f)=\lim_{X\to \infty}\left(\sum_{L\in \cN_X}\ia(L,M)f(L)\Big/\sum_{L\in\cN_X} \ia(L,M)\right)
\]
when the limit exists. One of the
defining properties of the function $\ia$ is that for all $L$, $M$, and $N\in \cM_V$
we have $\ia(L,M)\ia(M,N)=\ia(L,N)$, whence it follows that whether or not
$\bE_{\cN}(f)$ is defined is independent of the choice of $M$, and so is
its value when it is defined.
\begin{remark}\label{rmrk:expvaluequo}
  Expected values behave well under passing to quotients in the following sense.
  Let $G_1\to G_2$ be a surjective group homomorphism, let $P$ be a set
  of prime numbers not dividing $2\cdot \#G_1$, let $A_2$ be a quotient of
  $\Q[G_2]$ as above, and let $V_2$ be a finitely generated $\Q[G_2]$-module;
  let $A_1$ be the same as $A_2$ but viewed as a quotient of $\Q[G_1]$,
  and let $V_1$ be the same as $V_2$ but viewed as a $\Q[G_1]$-module. 
  For $i\in \{1,2\}$, let $\Lambda_i$ be the image of $\Z_{(P)}[G_i]$ in $A_i$.
  Note that in particular 
  the map $G_1\to G_2$ induces a ring isomorphism $\Lambda_1\to \Lambda_2$.
  For $i\in \{1,2\}$, define sets $\cM_{V_i}$ of
  $\Lambda_i$-modules as above. For brevity, write $\cM_i=\cM_{V_i}$ for $i\in \{1,2\}$.
  Let $f_2\colon \cM_{2}\to \C$ be a function,
  and let $f_1\colon \cM_{1}\to \C$ be given by $M\mapsto f_2(\Lambda_2\otimes_{\Lambda_1}M)$.
  Then one has
  $
    \bE_{\cM_1}(f_1) = \bE_{\cM_2}(f_2).
  $
\end{remark}\noindent
Let $K$ be a number field, and let $\Kbar$ be an algebraic closure
of $K$. Given a pair $(F,i)$, where $F\subset \Kbar$ is a Galois extension of $K$
and $i$ is an isomorphism between the Galois group of $F/K$
and $G$, we view $\Gal(F/K)$-modules as $G$-modules via $i$.
Let $\cF$ be the set of all such pairs
$(F,i)$
for which $F$ contains no primitive $p$-th root of unity for any prime number $p\in P$,
and for which there is an isomorphism $\Q\otimes_{\Z} \cO_F^\times\cong V$
of $\Q[G]$-modules. Assume that $\cF$ is infinite. Such an $\cF$ is what we
called a ``natural family'' in the introduction. 
Note that this family is a special case of the families considered in \cite[\S 2]{MR4105790}. 

For $(F,i)\in \cF$, let $c_{F/K}$ be the ideal norm of the product
of the prime ideals of $\cO_K$ that ramify in $F/K$. For a positive real number
$B$, let $\cF_{c\leq B}=\{(F,i)\in \cF: c_{F/K}\leq B\}$. 
The following version of the Cohen--Lenstra--Martinet heuristic is a variant
of \cite[Heuristic 2.1]{MR4105790} phrased in terms of Arakelov class groups.
It differs in several ways from
the heuristic formulated in \cite{MR756082,MR1037430}, but none
of those differences shall concern us in the present paper.
\begin{heuristic}\label{he:orig}
  Let $f$ be a ``reasonable''
  $\C$-valued function on $\cM_V$. Then the limit
  $$
  \lim_{B\to\infty} \frac{\sum_{(F,i)\in \cF_{c\leq B}}f(\Lambda\otimes_{\Z[G]}\Ar_F)}{\#\cF_{c\leq B}}
  $$
  exists, and is equal to $\bE_{\cM_V}(f)$.
\end{heuristic}\noindent
The notion of a reasonable function is left intentionally vague. The functions
considered in \cite{CMII} give rise to many examples of presumably reasonable
functions on $\cM_V$ that factor through $M\mapsto M_{\tors}$. An example of a
function not of that form that we would consider reasonable, and which depends
on the Galois module structure of both the class group and the unit group of
the ring of integers, is $M\mapsto \#\Hom_{\Lambda}(M/M_{\tors},M_{\tors})$.

If the set $P$ is infinite, then Conjecture \ref{conj:main-for-CL}
can be an obstruction to the conclusions of Heuristic \ref{he:orig}.
For example it was shown in \cite[\S 4]{MR4105790}, as a consequence of a
proven special case of Conjecture \ref{conj:main-for-CL}, that the conclusion
of Heuristic \ref{he:orig} does not, in general, hold for functions of the
form $M\mapsto \chi([M])$, where $\chi\colon \G_{0}(\Lambda)\to \C^\times$
is a homomorphism of finite order. In \cite{MR4105790} a corrected heuristic
was proposed in which $P$ was assumed to be finite.
In the remainder of the section, we formulate a
Cohen--Lenstra--Martinet heuristic without the hypothesis that $P$ be finite.

\begin{proposition}\label{prop:ZSinfty-constant}
  Let $(F,i)$ and $(F',i')\in \cF$, and let $S_{\infty}$ and $S_{\infty}'$
  be the sets of Archimedean places of $F$ and $F'$, respectively. 
  Then the equality 
  \[
    [\Lambda\otimes_{\Z[G]} \Z^{S_{\infty}}] = [\Lambda\otimes_{\Z[G]} \Z^{S_{\infty}'}]
  \]
  holds in $\G_{0}(\Lambda)$.
\end{proposition}

\begin{proof}
  By definition of the family $\cF$, we have an isomorphism
  $\Q\otimes_{\Z}\cO_F^\times\cong
  \Q\otimes_{\Z}\cO_{F'}^\times$ of $\Q[G]$-modules.
  By a theorem of Herbrand there is an isomorphism
  $(\Q\otimes_{\Z}\cO_F^\times) \oplus \Q \cong\Q^{S_{\infty}}$ of $\Q[G]$-modules,
  see for example \cite[Chapter I, 4.3]{MR782485}, and similarly for $\cO_{F'}^\times$.
  Thus, we have an isomorphism $\Q^{S_{\infty}}\cong \Q^{S_{\infty}'}$.
  
  Since all point stabilisers for $S_{\infty}$ and $S_{\infty}'$ are
  inertia groups at Archimedean places, they are all cyclic.
  It follows from Artin's induction theorem (e.g. by combining
  \cite[\S 13.1, Corollary 1 and Theorem 30]{Serre}
  and comparing dimensions) that if $S$ and $S'$ are finite $G$-sets with
  cyclic point stabilisers such that there is an isomorphism
  $\Q^S\cong \Q^{S'}$ of $\Q[G]$-modules,
  then the $G$-sets $S$ and $S'$ are isomorphic.
  In particular, there is then an isomorphism
  $\Z^S\cong \Z^{S'}$
  of $\Z[G]$-modules. The result follows by applying this observation
  to the $G$-sets $S_{\infty}$ and $S_{\infty}'$.
\end{proof}\noindent
We define $C(\cF)$ to be the common class of $\Lambda\otimes_{\Z[G]} \Z^{S_{\infty}}$ in $\G_{0}(\Lambda)$
for all $(F,i)\in \cF$, where $S_{\infty}$ is the $G$-set of Archimedean places of $F$.

\begin{heuristic}\label{he:main}
  Let $\cN=\{M\in \cM_V: [M]=C(\cF)\text{ in }\G_{0}(\Lambda)\}$, and let $f$ be a ``reasonable''
  $\C$-valued function on $\cM_V$.
  Then the limit
  \[
  \lim_{B\to\infty} \frac{\sum_{(F,i)\in \cF_{c\leq B}}f(\Lambda\otimes_{\Z[G]}\Ar_F)}{\#\cF_{c\leq B}}
  \]
  exists, and is equal to $\bE_{\cN}(f)$.
\end{heuristic}

\section{Grothendieck groups of orders}\label{sec:duals}\noindent
In this section we review some standard facts about
Grothendieck groups of orders,
and examine the effect
of some duality operations upon these Grothendieck groups.

Let $R$ be a Dedekind domain and let $k$ be the field of
fractions of $R$.
An \emph{$R$-order} is an $R$-algebra that is finitely generated and projective as
an $R$-module.
For example if $G$ is a finite group, then the group
ring $\Lambda=R[G]$ is an $R$-order.

Let $\Lambda$ be an $R$-order.
A finitely generated $\Lambda$-module that is projective over $R$ will be
referred to as a \emph{$\Lambda$-lattice}.
Let $\G_{0}^{R}(\Lambda)$ denote the Grothendieck group of the category of
$\Lambda$-lattices. By definition, $\G_{0}^{R}(\Lambda)$ is the additive group generated 
by expressions $[M]$, one for each isomorphism class of $\Lambda$-lattices 
$M$, with a relation $[L]+[N]=[M]$ whenever there exists a short exact sequence 
\[
0 \longrightarrow L \longrightarrow M \longrightarrow N \longrightarrow 0
\] of $\Lambda$-lattices.
Recall from Section \ref{subsec:Groth-group-rings} that 
if $T$ is a ring, then we similarly define the Grothendieck group $\G_0(T)$
of the category of finitely generated $T$-modules by replacing, in the above
definition, ``$\Lambda$-lattice'' by ``finitely generated $T$-module''.
By \cite[Theorem (38.42)]{MR892316}, the inclusion of the category
of $\Lambda$-lattices into the category of all finitely generated
$\Lambda$-modules induces a canonical isomorphism $\G_{0}^{R}(\Lambda)\cong \G_{0}(\Lambda)$.

Let $\Lambda^{\op}$ denote the opposite ring of $\Lambda$.
If $M$ is a $\Lambda$-lattice, then $M^*=\Hom_R(M,R)$ is a $\Lambda^{\op}$-lattice.
This defines a contravariant functor from the category of $\Lambda$-lattices
to the category of $\Lambda^{\op}$-lattices, given on objects by $M\mapsto M^*$
for every $\Lambda$-lattice $M$, and on morphisms by $f\mapsto (\nu\mapsto \nu\circ f)\in M^*$
for every morphism $f\colon M\to N$ of $\Lambda$-lattices and every $\nu\in N^*$.
This functor is easily seen to be exact, and to induce a group isomorphism
$\G_0^R(\Lambda)\to\G_0^R(\Lambda^{\op})$, and hence an isomorphism
$$
  \involGrothen\colon \G_0(\Lambda)\to \G_0(\Lambda^{\op}).
$$

For a $\Lambda$-module $N$, we define $N^{\lor}=\Hom_R(N,k/R)$, which is also a
$\Lambda^{\op}$-module. If $N$ is a finitely generated $\Lambda$-module that
is $R$-torsion, then $N^{\lor}$ is finitely generated over $\Lambda^{\op}$
and $R$-torsion.

In the special case that $\Lambda=R[G]$, where $G$ is a finite group,
the ring $\Lambda$ is equipped with an involution $\iota$
induced by $g\mapsto g^{-1}$ for all $g\in G$. If $M$ is a finitely generated
$R[G]$-module, then we view the $R[G]^{\op}$-module $M^{*}$
as a finitely generated $R[G]$-module via $\iota$, and we view the map $\involGrothen$
as an automorphism of $\G_0(R[G])$. Specialising further to $R=\Z$, if $A$ is a
Hausdorff locally compact abelian topological group
on which $G$ acts by continuous automorphisms, then we view its Pontryagin dual
$\Hom_{\cts}(A,\R/\Z)$ as a $\Z[G]$-module via $\iota$. If $N$ is a
$\Z[G]$-module of finite cardinality,
then so is $\Hom_{\cts}(N,\R/\Z)=\Hom_{\Z}(N,\Q/\Z)=N^{\lor}$.
\begin{proposition}\label{prop:dual}
  Let $N$ be a finitely generated $\Lambda$-module that is $R$-torsion. Then the equality
  \[
  [N^{\lor}]=-\involGrothen[N]
  \] 
  holds in $\G_{0}(\Lambda^{\op})$.
\end{proposition}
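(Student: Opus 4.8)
The plan is to compute both sides of the asserted identity along a two-term resolution of $N$ by $\Lambda$-lattices. First I would pick a finitely generated free $\Lambda$-module $L_0$ with a surjection $L_0\to N$ and set $L_1=\ker(L_0\to N)$, giving a short exact sequence $0\to L_1\to L_0\to N\to 0$ of finitely generated $\Lambda$-modules. As $R$ is Noetherian and $\Lambda$ is finitely generated over $R$, the module $L_1$ is finitely generated over $\Lambda$, hence over $R$; and $L_1$ is $R$-torsion-free, being a submodule of the $R$-projective module $L_0$, hence $R$-projective since $R$ is Dedekind. Thus $L_0$ and $L_1$ are $\Lambda$-lattices, and the exact sequence gives $[N]=[L_0]-[L_1]$ in $\G_0(\Lambda)$; applying $\sigma$ and using its definition (together with $\G_0^R(\Lambda)\cong\G_0(\Lambda)$), we get $\sigma[N]=[L_0^*]-[L_1^*]$ in $\G_0(\Lambda^{\op})$.

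Next I would apply $\Hom_R(-,R)$ to the resolution and extract the long exact $\Ext_R^\bullet(-,R)$-sequence of $\Lambda^{\op}$-modules
\[
0\to\Hom_R(N,R)\to L_0^*\to L_1^*\to\Ext^1_R(N,R)\to\Ext^1_R(L_0,R).
\]
Here $\Hom_R(N,R)=0$ because $N$ is $R$-torsion and $R$ is a domain, and $\Ext^1_R(L_0,R)=0$ because $L_0$ is $R$-projective. Hence $0\to L_0^*\to L_1^*\to\Ext^1_R(N,R)\to 0$ is a short exact sequence of finitely generated $\Lambda^{\op}$-modules in which $L_0^*$ and $L_1^*$ are $\Lambda^{\op}$-lattices, so $[\Ext^1_R(N,R)]=[L_1^*]-[L_0^*]=-\sigma[N]$ in $\G_0(\Lambda^{\op})$.

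It then remains to identify $\Ext^1_R(N,R)$ with $N^{\lor}$ as $\Lambda^{\op}$-modules. For this I would apply $\Hom_R(N,-)$ to $0\to R\to k\to k/R\to 0$. Since $N$ is $R$-torsion and $k$ is $R$-torsion-free and divisible, hence injective over the Dedekind domain $R$, both $\Hom_R(N,k)$ and $\Ext^1_R(N,k)$ vanish, so the connecting map is an isomorphism $N^{\lor}=\Hom_R(N,k/R)\DistTo\Ext^1_R(N,R)$. Combined with the previous paragraph, this yields $[N^{\lor}]=-\sigma[N]$.

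Every ingredient here is standard: Noetherianness of $\Lambda$, the fact that finitely generated torsion-free modules over a Dedekind domain are projective, injectivity of $k$ over $R$, and the long exact $\Ext$-sequence. The one point that genuinely needs attention is the bookkeeping of the $\Lambda^{\op}$-module structures. One has to check that the $\Lambda^{\op}$-actions on $\Hom_R(N,R)$, on $\Ext^1_R(N,R)$, and on the $L_i^*$ are all obtained by applying the relevant $R$-linear functor to the left-translation maps $n\mapsto\lambda n$ ($\lambda\in\Lambda$), and that these translation maps commute with the $\Lambda$-linear maps appearing in the resolution; naturality of the connecting homomorphisms then forces all maps in sight — in particular the comparison isomorphism $N^{\lor}\cong\Ext^1_R(N,R)$ — to be $\Lambda^{\op}$-linear. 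I expect this to be the main, if mild, obstacle; the rest is routine Dedekind-domain and Grothendieck-group formalism.
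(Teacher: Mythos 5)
Your argument is correct, and its skeleton is the same as the paper's: resolve $N$ by a short exact sequence of $\Lambda$-lattices, dualize, and identify $N^{\lor}$ with the cokernel of the dual map, so that $[N^{\lor}]=[L_1^*]-[L_0^*]=-\sigma[N]$. Where you differ is in how that identification is made. The paper stays entirely elementary: it applies $\Hom_R(M_i,\bullet)$ to $0\to R\to k\to k/R\to 0$, notes that the middle vertical map $\Hom_R(M_2,k)\to\Hom_R(M_1,k)$ is an isomorphism because $N$ is $R$-torsion, and reads off $N^{\lor}\cong M_1^*/M_2^*$ from the snake lemma, with no derived functors and with the $\Lambda^{\op}$-equivariance visible from the naturality of the diagram. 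You instead pass through $\Ext^1_R(N,R)$ twice: once via the long exact sequence in the first variable (using $\Hom_R(N,R)=0$ and $R$-projectivity of $L_0$), and once via the second variable (using that $k$ is torsion-free and divisible, hence injective over $R$, to make the connecting map $N^{\lor}=\Hom_R(N,k/R)\DistTo\Ext^1_R(N,R)$ an isomorphism). Both routes are sound; yours trades the one-step snake-lemma computation for standard homological machinery, at the cost of the extra input (injectivity of $k$) and the equivariance bookkeeping you rightly flag, while the paper's diagram makes the $\Lambda^{\op}$-structure transparent for free. Your attention to the $\Lambda^{\op}$-linearity of the comparison isomorphism is appropriate and the naturality argument you sketch does settle it.
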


\begin{proof}
  Since $R$ is a Dedekind domain, every $\Lambda$-submodule of a $\Lambda$-lattice
  is itself a $\Lambda$-lattice. Hence there exists a presentation
  $0\to M_1\to M_2\to N\to 0$ of $N$ by $\Lambda$-lattices, so that
  $[N] = [M_2]-[M_1]$ in $\G_{0}(\Lambda)$. We claim that $N^\lor$ is canonically
  isomorphic as a $\Lambda^{\op}$-module to $M_1^*/M_2^*$.

  Since $M_1$, $M_2$ are projective over $R$, applying the functors $\Hom_R(M_i,\bullet)$
  for $i=1$, $2$ to the short exact sequence
  \[
    0 \longrightarrow R \longrightarrow k \longrightarrow k/R \longrightarrow 0
  \]
  yields the commutative diagram with exact rows
  \[
    \xymatrix{
      0 \ar[r] & \Hom_R(M_2,R) \ar[r]\ar[d] & \Hom_R(M_2,k) \ar[r]\ar[d] & \Hom_R(M_2,k/R) \ar[r]\ar[d] & 0\\
      0 \ar[r] & \Hom_R(M_1,R) \ar[r] & \Hom_R(M_1,k) \ar[r] & \Hom_R(M_1,k/R) \ar[r] & 0,\\
    }
  \]
  where the vertical maps are induced by the injection $M_1\to M_2$.
  Of these, the middle map $\Hom_R(M_2,k)\to \Hom_R(M_1,k)$ is an isomorphism.
  Indeed, it is the $k$-linear dual of the map $k\otimes_R M_1\to k\otimes_R M_2$,
  which is clearly an isomorphism, since the cokernel $N$ of $M_1\to M_2$ is $R$-torsion.
  The snake lemma therefore gives an isomorphism of right $\Lambda$-modules
  from the kernel of $\Hom_R(M_2,k/R)\to \Hom_R(M_1,k/R)$ to the cokernel of
  $\Hom_R(M_2,R)\to \Hom_R(M_1,R)$. Since $\Hom_R(\bullet,k/R)$
  is left exact, that kernel is exactly $N^\lor$, while the cokernel is precisely $M_1^*/M_2^*$, as claimed.
  The proposition immediately follows.
\end{proof}

\section{Oriented Arakelov class groups as Galois modules}\label{sec:Arakelov}\noindent
In this section we prove some properties of oriented Arakelov class
groups as Galois modules, and formulate the main working hypothesis that motivates the
statistical heuristic in Section \ref{sec:CL}.

\begin{lemma}\label{lem:tauv}
Let $F/K$ be a finite Galois extension of number fields, let $G$ be the Galois
group, let $S_{\infty}$ be the set of Archimedean places of $F$,
and let $d$ be the degree of $K$ over $\Q$. Then the equality
\[
  [\Hom_{\cts}(\comp(F_{\R}^\times),\R/\Z)] = d\cdot [\Z[G]] - [\Z^{S_{\infty}}]
\]
holds in $\G_0(\Z[G])$, where $\comp(F_{\R}^\times)$ is as defined in Section \ref{sec:Arak}.
\end{lemma}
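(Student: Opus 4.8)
The plan is to decompose the maximal compact subgroup $\comp(F_\R^\times) = \prod_{w\in S_\infty}\comp(F_w^\times)$ according to the action of $G$ on $S_\infty$, compute the Pontryagin dual of each $G$-orbit factor, and then assemble the contributions in $\G_0(\Z[G])$. First I would observe that $G$ permutes the Archimedean places of $F$, and that $\Z[S_\infty]$ — the permutation module on these places — decomposes as a direct sum over $G$-orbits $O\subseteq S_\infty$ of the induced modules $\Z[O]\cong\Z[G/G_w]=\Ind_{G_w}^G\Z$, where $G_w$ is the decomposition group of a chosen place $w\in O$. Correspondingly, $\comp(F_\R^\times)$ is a product over orbits of the $G$-modules $\prod_{w'\in O}\comp(F_{w'}^\times)$, each of which is the ``induced'' compact group $\Coind_{G_w}^G\comp(F_w^\times)$; its Pontryagin dual is the induced $\Z[G]$-module $\Ind_{G_w}^G\bigl(\Hom_{\cts}(\comp(F_w^\times),\R/\Z)\bigr)$. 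So the problem reduces to the local computation: for a single Archimedean place $w$ with decomposition group $G_w$, identify $[\Hom_{\cts}(\comp(F_w^\times),\R/\Z)]$ as a $\Z[G_w]$-module class, and then induce up, using that induction $\Ind_{G_w}^G$ is exact and hence descends to a map $\G_0(\Z[G_w])\to\G_0(\Z[G])$.

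The local analysis splits into two cases. If $w$ is real, then $\comp(F_w^\times)=\{\pm1\}$ is finite of order $2$, so its Pontryagin dual is $\Hom_\Z(\Z/2,\Q/\Z)\cong\Z/2$; as a $G_w$-module (with $G_w$ of order at most $2$) this is the trivial module $\Z/2$. If $w$ is complex, then $\comp(F_w^\times)$ is the circle group, whose Pontryagin dual is $\Z$; the nontrivial element of $G_w$ (complex conjugation, when $w$ lies below a real place of $K$, i.e. when $\#G_w=2$) acts on the circle by inversion, hence on the dual $\Z$ by $-1$, so the dual is the sign module $\Z^-$, while if $G_w$ is trivial the dual is just $\Z$. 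In every case the rational character is $\chi_{\mathrm{reg},G_w}-\mathbf{1}_{G_w}$ (the regular minus trivial character of $G_w$): indeed $\Q\otimes\Z^-$ is the nontrivial character of the order-$2$ group, $\Q\otimes(\Z/2)=0$, and $\Q\otimes\Z$ is the trivial character — so in each case $[\Q\otimes\Hom_{\cts}(\comp(F_w^\times),\R/\Z)] = [\Q[G_w]]-[\Q]$ in $\G_0(\Q[G_w])$. The key point, which lets us work integrally rather than just rationally, is that $\G_0(\Z[G_w])$ behaves well here: for $\#G_w\le 2$ we can exhibit an explicit short exact sequence of $\Z[G_w]$-lattices realising the relation $[\Hom_{\cts}(\comp(F_w^\times),\R/\Z)] = [\Z[G_w]]-[\Z]$ directly — for instance $0\to\Z^-\to\Z[G_w]\to\Z\to 0$ in the complex case with $\#G_w=2$, and handling the real and unramified cases similarly (in the real case one uses $0\to\Z\to\Z[G_w]\to\Z^-\to 0$ together with the fact that the class of the finite module $\Z/2$ is, via Proposition~\ref{prop:dual} applied with $R=\Z$, minus $\sigma$ of the class of a suitable lattice presentation, ultimately giving the same net contribution after dualising).

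Finally I would induce up: applying $\Ind_{G_w}^G$ to the relation $[\Hom_{\cts}(\comp(F_w^\times),\R/\Z)] = [\Z[G_w]]-[\Z]$ in $\G_0(\Z[G_w])$ yields
\[
[\Hom_{\cts}(\textstyle\prod_{w'\in O}\comp(F_{w'}^\times),\R/\Z)] = [\Z[G]] - [\Z[G/G_w]] = [\Z[G]] - [\Z[O]]
\]
in $\G_0(\Z[G])$, using $\Ind_{G_w}^G\Z[G_w]=\Z[G]$ and $\Ind_{G_w}^G\Z=\Z[G/G_w]$. Summing over the $G$-orbits $O$ in $S_\infty$, of which there are exactly $d=[K:\Q]$ (since the orbits correspond to the Archimedean places of $K$), gives
\[
[\Hom_{\cts}(\comp(F_\R^\times),\R/\Z)] = \sum_{O} \bigl([\Z[G]]-[\Z[O]]\bigr) = d\cdot[\Z[G]] - [\Z[S_\infty]],
\]
as desired. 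The main obstacle is the integral local computation: verifying that the rationally-evident identity $[\Q\otimes\,\cdot\,]=[\Q[G_w]]-[\Q]$ actually lifts to an honest relation in $\G_0(\Z[G_w])$, which requires producing the explicit lattice sequences above and, in the real place case, correctly bookkeeping the dual of the finite group $\{\pm1\}$ via Proposition~\ref{prop:dual}. Everything else — the orbit decomposition of $\Z[S_\infty]$, exactness of induction, and the count of orbits — is routine.
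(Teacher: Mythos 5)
Your overall strategy — decompose $\comp(F_\R^\times)$ along $G$-orbits of $S_\infty$, compute the Pontryagin dual locally at a representative place, and induce up — is precisely the structure of the paper's proof. However, there is a genuine error in the local computation, and a second, related error in the orbit count, and these do not cancel.

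The uniform local identity $[\Hom_{\cts}(\comp(F_w^\times),\R/\Z)]=[\Z[G_w]]-[\Z]$ that you assert ``in every case'' fails when $w$ lies over a \emph{complex} place $v$ of $K$. In that case $G_w$ is trivial and the Pontryagin dual of the circle is $\Z$, so the left-hand side is $[\Z]$, which is nonzero in $\G_0(\Z)$, while the right-hand side is $[\Z]-[\Z]=0$. Even the rational claim breaks in this case: you correctly note that $\Q\otimes\Z$ is the trivial character, yet then assert it equals $[\Q[G_w]]-[\Q]$, which for trivial $G_w$ is $0$. Compounding this, you claim the number of $G$-orbits in $S_\infty$ is $d=[K:\Q]$; in fact the orbits biject with the Archimedean places of $K$, of which there are $r_1(K)+r_2(K)$, not $d$, unless $K$ is totally real. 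A concrete failure: take $F=K=\Q(i)$, $G$ trivial. Your recipe gives one orbit contributing $[\Z]-[\Z]=0$, but the lemma asserts $[\Hom_{\cts}(S^1,\R/\Z)]=[\Z]=2[\Z]-[\Z]$, which is $[\Z]\neq 0$.

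The fix, which is what the paper does, is to weight the contribution from the orbit above $v$ by $\delta_v=[K_v:\R]$, obtaining $[\Ind_{G/I_v}\tau_v]=\delta_v[\Z[G]]-[\Z[G/I_v]]$, where $\tau_v$ is the local Pontryagin dual; summing over Archimedean $v$ of $K$ then uses $\sum_v\delta_v = d$ and $\bigoplus_v\Z[G/I_v]\cong\Z[S_\infty]$. The case $\delta_v=2$ is exactly the one your proposal drops, and for $v$ real and split ($I_v$ trivial, dual $\F_2$) one needs the short exact sequence $0\to\Z[G]\xrightarrow{\times 2}\Z[G]\to\F_2[G]\to 0$ to see $[\Ind\,\F_2]=0$, as you essentially noted.
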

\begin{proof}
If $v$ is an Archimedean place of $K$, let $I_v\subset G$ denote an inertia
subgroup at $v$, and let $\tau_v$ be a $\Z[I_v]$-module defined as follows:
if $v$ is real and $I_v$ is the trivial group, then $\tau_v=\F_2$;
if $v$ is real and $I_v$ has order $2$, then $\tau_v$ is free over $\Z$
of rank $1$, and with the generator of $I_v$ acting by $-1$; and if $v$
is complex, so that $I_v$ is necessarily trivial, then $\tau_v=\Z$.
Then it is easy to see that we have an isomorphism
\begin{eqnarray*}
  \Hom_{\cts}(\comp(F_{\R}^\times),\R/\Z)\cong \bigoplus_v\Ind^G_{I_v}\tau_v
\end{eqnarray*}  
of $\Z[G]$-modules, where the direct sum runs over the Archimedean places of $K$,
and $\Ind^G_{I_v}$ denotes induction from $I_v$ to $G$. 

If $v$ is a real place of $K$ such that $I_v$ is trivial,
then the exact sequence
\[
0 \longrightarrow \Ind^G_{I_v}\Z\stackrel{\times 2}{\longrightarrow}\Ind^G_{I_v}\Z \longrightarrow \Ind^G_{I_v}\F_2 \longrightarrow 0
\]
shows that $[\Ind^G_{I_v}\tau_v] =0$ in $\G_{0}(\Z[G])$. We deduce
that for all Archimedean places $v$ of $K$ we have
\begin{eqnarray}\label{eq:tauv}
[\Ind^G_{I_v}\tau_v] = \delta_v[\Z[G]] - [\Z^{G/I_v}],
\end{eqnarray}
where $\delta_v=1$ if $v$ is real, and $\delta_v=2$ is $v$ is complex.
The result follows by summing \eqref{eq:tauv} over all Archimedean
places $v$ of $K$.
\end{proof}\noindent
For every permutation $\Z[G]$-module $M$, we have
$M\cong M^{*}$, where $M^*$ is as defined in Section \ref{sec:duals}
with $R=\Z$, and therefore $\involGrothen[M] = [M]$ in $\G_{0}(\Z[G])$.
We will repeatedly use this observation.
If $S$ is a set of places of $F$ containing all Archimedean places,
let $\cO_{F,S}$ denote the ring of $S$-integers in $F$, 
let $\cO_{F,S}^{\times}$ denote its unit group, and let $\Cl_{F,S}$
denote its class group. If $S$ is $G$-stable, then $\cO_{F,S}^\times$ and
$\Cl_{F,S}$ are $\Z[G]$-modules.
Recall that $\Ar_F$ and $\Ara{F}$ denote the Pontryagin duals of
$\Pic_F^0$ and $\PicTilde{F}$, respectively, and $\mu_F$ denotes the group
of roots of unity in $F$.

\begin{proposition}\label{prop:Ara}
  Let $F/K$ be a finite Galois extension of number fields, let $G$ be the Galois group,
  let $d$ be the degree of $K$ over $\Q$, and let $S$ be a finite $G$-stable
  set of places of $F$ containing all Archimedean places. Then $\Ar_F$ and $\Ara{F}$
  are finitely generated $\Z[G]$-modules. Moreover the equalities
  \begin{align*}
    [\Ar_F] & = \involGrothen[\cO_F^\times/\mu_F] - \involGrothen[\Cl_F],\\
    [\Ara{F}] & = d\cdot [\Z[G]] - [\Z^S]+ \involGrothen[\cO^\times_{F,S}]-\involGrothen[\Cl_{F,S}]
  \end{align*}
  hold in $\G_{0}(\Z[G])$.
\end{proposition}

\begin{proof}
It was already noted in Section \ref{sec:Arak} that $\Ar_F$ and $\Ara{F}$ are finitely generated 
$\Z[\Aut F]$-modules, and this immediately implies the first assertion. 
The expression for $[\Ar_F]$ follows from Propositions \ref{prop:SES-for-AF} and \ref{prop:dual}. 

The rest of the proof is devoted to the derivation of the expression for $[\Ara{F}]$.
Let $S_{\f}=S\setminus S_{\infty}$ denote the set of non-Archimedean places in $S$.
  The subgroup of $\Id_F$ generated by the prime ideals corresponding
  to the places in $S_{\f}$ is free abelian
  on the set $S_{\f}$. Below, when we write $\Z^{S_{\f}}$,
  we will mean that subgroup.
  Let $\Id_{F,S}$ be the quotient of
  $\Id_F$ by the subgroup $\Z^{S_{\f}}$. It is naturally isomorphic to the group
  of fractional ideals of $\cO_{F,S}$.
  The preimage of
  $$
  \Z^{S_{\f}}\!\times_{\R_{>0}}F_{\R}^\times\subset\Id_F\times_{\R_{>0}}F_{\R}^\times
  $$
  under the inclusion map $F^\times\to \Id_F\times_{\R_{>0}}F_{\R}^\times$
  is $\cO_{F,S}^{\times}$.
  There is thus a commutative diagram of $\Z[G]$-modules with exact rows and columns
  $$
  \xymatrix{
    & 0\ar[d] & 0\ar[d] & 0\ar[d] & \\
    0\ar[r] & \cO_{F,S}^\times \ar[d]\ar[r] &
    \Z^{S_{\f}}\!\times_{\R_{>0}}F_{\R}^\times \ar[r]\ar[d] & T_{F,S} \ar[d]\ar[r] & 0\\
    0 \ar[r] & F^\times \ar[r]\ar[d] & \Id_F\times_{\R_{>0}}F_{\R}^\times\ar[r]\ar[d] & \widetilde{\Pic}_{F}^0 \ar[r]\ar[d] & 0\\
    0 \ar[r] & F^\times/\cO_{F,S}^\times \ar[r]\ar[d] & \Id_{F,S} \ar[r]\ar[d] & \Cl_{F,S} \ar[r]\ar[d] & 0\\
               & 0 & 0 & 0, & \\
  }
  $$
  where $T_{F,S}$ is defined by the exactness of the last column, and the exactness
  of the first row follows from the snake lemma.

The group $\Z^{S_{\f}}\!\times_{\R_{>0}}F_{\R}^\times$ can be explicitly described as follows: we have
\begin{eqnarray*}
    \Z^{S_{\f}}\!\times_{\R_{>0}}F_{\R}^\times =
    \Big\{\big((a_{\fp})_{\fp},b\big)\in \Z^{S_{\f}}\!\times F_{\R}:
    \prod_{\fp\in S_{\f}}\!\left(\fN\fp\right)^{a_{\fp}} = \AbsNm(b)\Big\},
\end{eqnarray*}
where $\fN$ and $\AbsNm$ are as defined in Section \ref{sec:Arak}.
That group naturally embeds into
$\R^{S_{\f}}\!\times_{\R_{>0}}F_{\R}^\times$, where the fibre product is taken
with respect to the map
\begin{eqnarray*}
  \R^{S_{\f}} & \longrightarrow & \R_{>0},\\
  (a_{\fp})_{\fp\in S_{\f}} & \mapsto & \prod_{\fp\in S_{\f}}\!\left(\fN\fp\right)^{a_{\fp}},
\end{eqnarray*}
and to the same map $\AbsNm\colon F_{\R}^\times\to \R_{>0}$
as before, so that we have an exact sequence
\begin{eqnarray}\label{eq:S-torus}
  0 \longrightarrow T_{F,S} \longrightarrow \frac{\R^{S_{\f}}\!\times_{\R_{>0}}F_{\R}^\times}{\cO_{F,S}^\times}
  \longrightarrow (\R/\Z)^{S_{\f}} \longrightarrow 0.
\end{eqnarray}

The preimage of $\{0\}\times \comp(F_{\R}^\times)\subset
\Z^{S_{\f}}\!\times_{\R_{>0}}F_{\R}^\times$ in $\cO_{F,S}^\times$ is
$\mu_F$. We therefore deduce from 
Dirichlet's ($S$-)unit theorem \cite[Chapter V, \S 1]{MR1282723}, that the
middle term of \eqref{eq:S-torus} is an extension of the form
\begin{eqnarray}\label{eq:Dirichlet}
  0 \longrightarrow \comp(F_{\R}^\times)/\mu_F \longrightarrow \frac{\R^{S_{\f}}\!\times_{\R_{>0}}F_{\R}^\times}{\cO_{F,S}^\times}
    \longrightarrow (\cO_{F,S}^\times/\mu_F)\otimes_{\Z}\R/\Z \longrightarrow 0.
\end{eqnarray}
In particular, it is a compact abelian group whose Pontryagin dual
$$
\Hom_{\cts}\left(\frac{\R^{S_{\f}}\!\times_{\R_{>0}}F_{\R}^\times}{\cO_{F,S}^\times},\R/\Z\right)
$$
is finitely generated, therefore the same is true of the closed subgroup $T_{F,S}$.

  Taking the Pontryagin dual of the right column of the commutative diagram above,
  we deduce that there is an equality
  \begin{eqnarray}\label{eq:Ar1} 
    [\Ara{F}]=[\Cl_{F,S}^\lor] + [\Hom_{\cts}(T_{F,S},\R/\Z)]
  \end{eqnarray}
  in $\G_{0}(\Z[G])$.

Combining \eqref{eq:Ar1} with the Pontryagin duals of \eqref{eq:S-torus} and \eqref{eq:Dirichlet} and
applying Lemma \ref{lem:tauv}, we see that there is an equality
\[
  [\Ara{F}] =  [\Cl_{F,S}^\lor] + d\cdot[\Z[G]] - [\Z^{S_\infty}] - [\mu_{F}^{\lor}] + [(\cO_{F,S}^\times/\mu_F)^*] - [(\Z^{S_{\f}})^*]
\]
in $\G_{0}(\Z[G])$.
We now deduce the result by applying Proposition~\ref{prop:dual} and noting that
there are $\Z[G]$-module isomorphisms $(\Z^{S_{\f}})^* \cong \Z^{S_{\f}}$ and
$\Z^S \cong \Z^{S_{\infty}} \oplus \Z^{S_{\f}}$.
\end{proof}\noindent

\begin{corollary}\label{cor:Ara}
  Let $F/K$ be a finite Galois extension of number fields, let $G$ be the Galois group,
  let $d$ be the degree of $K$ over $\Q$, and let $S$ be a finite $G$-stable set of places
  of $F$ containing all Archimedean places, and large enough for $\Cl_{F,S}$ to be trivial.
  Then the equality
  \[
    [\Ara{F}] = d\cdot [\Z[G]] - \involGrothen([\Z^S]-
    [\cO^\times_{F,S}])\label{eq:CorArTilde}
  \]
  holds in $\G_{0}(\Z[G])$.
\end{corollary}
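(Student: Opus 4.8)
The plan is to deduce Corollary~\ref{cor:Ara} directly from Proposition~\ref{prop:Ara} by specializing the choice of $S$. First I would invoke the finiteness of the ideal class group $\Cl_F$ together with the fact that a sufficiently large finite set of primes generates it, so that there exists a finite, $G$-stable set $S$ of places of $F$, containing $S_\infty$, for which $\Cl_{F,S}$ is trivial; this is the standard observation that enlarging $S$ kills the $S$-class group, and $G$-stability is arranged by throwing in all Galois conjugates of each added prime. One subtlety worth a sentence: we need $S$ to be $G$-stable as a \emph{set of places}, which is automatic since $G$ permutes the primes of $F$ above any rational prime; and the resulting $\cO_{F,S}^\times$ and $\Cl_{F,S}$ are then genuinely $\Z[G]$-modules as required by Proposition~\ref{prop:Ara}.

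Next I would simply substitute $\Cl_{F,S}=0$ into the conclusion of Proposition~\ref{prop:Ara}. Since $[\Cl_{F,S}]=0$ in $\G_0(\Z[G])$, the term $\sigma[\Cl_{F,S}]$ vanishes, leaving
\[
[\Ara{F}] = d\cdot[\Z[G]] - [\Z[S]] + \sigma[\cO^\times_{F,S}].
\]
To match the stated form $d\cdot[\Z[G]] - \sigma([\Z[S]] - [\cO^\times_{F,S}])$, it remains to observe that $\sigma[\Z[S]] = [\Z[S]]$, which holds because $\Z[S]$ is a permutation $\Z[G]$-module and hence (as noted in the paragraph following Lemma~\ref{lem:tauv}) isomorphic to its own $\Z$-linear dual, so $\sigma$ fixes its class. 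Then $[\Z[S]] = \sigma[\Z[S]]$ and the right-hand side rearranges to $d\cdot[\Z[G]] - \sigma([\Z[S]]) + \sigma[\cO^\times_{F,S}] = d\cdot[\Z[G]] - \sigma([\Z[S]] - [\cO^\times_{F,S}])$, by $\Z$-linearity of $\sigma$.

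There is essentially no hard part here: the corollary is a formal consequence of the proposition once one knows that $S$ can be chosen to trivialize the $S$-class group. The only thing requiring a moment's care is making sure the chosen $S$ is simultaneously finite, $G$-stable, and large enough — so I would state that choice explicitly at the start of the proof and then let the computation above finish it in two lines.
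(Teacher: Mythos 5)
Your proof is correct and is essentially the paper's own argument: specialize Proposition \ref{prop:Ara} to an $S$ with $\Cl_{F,S}$ trivial and use that $\Z[S]$ is a permutation module, so $\sigma[\Z[S]]=[\Z[S]]$, to fold the $-[\Z[S]]$ term under $\sigma$. (The opening discussion of constructing such an $S$ is harmless but unnecessary, since the corollary takes $S$ with $\Cl_{F,S}$ trivial as a hypothesis.)
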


\begin{proof}
The result follows by combining Proposition \ref{prop:Ara} with the observation
that $\involGrothen[\Z^S]=[\Z^S]$.
\end{proof}

\begin{corollary}\label{cor:indepS}
Let $F/K$ be a finite Galois extension of number fields, let $G$ be the Galois group, and
let $S$ and $S'$ be two finite $G$-stable sets of places of $F$, both containing all Archimedean places.
Then the equality
\[
  [\Z^{S'}] - [\cO_{F,S'}^\times] + [\Cl_{F,S'}] = [\Z^S] - [\cO_{F,S}^\times] + [\Cl_{F,S}]\\
\]
holds in $\G_{0}(\Z[G])$.
\end{corollary}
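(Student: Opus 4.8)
The plan is to read the claim off Proposition~\ref{prop:Ara}, using that its left-hand side $[\Ara{F}]$ is manifestly independent of the chosen auxiliary set of places. Both $S$ and $S'$ are finite, $G$-stable, and contain all Archimedean places, so Proposition~\ref{prop:Ara} applies to each. Equating the two resulting expressions for $[\Ara{F}]$ and cancelling the common term $d\cdot[\Z[G]]$ yields
\[
-[\Z[S]] + \sigma[\cO_{F,S}^\times] - \sigma[\Cl_{F,S}] = -[\Z[S']] + \sigma[\cO_{F,S'}^\times] - \sigma[\Cl_{F,S'}]
\]
in $\G_{0}(\Z[G])$.

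It then remains to strip off the twist $\sigma$. Since $\Z[S]$ and $\Z[S']$ are permutation $\Z[G]$-modules, they are self-dual, so $\sigma[\Z[S]] = [\Z[S]]$ and $\sigma[\Z[S']] = [\Z[S']]$ (the observation recorded after Lemma~\ref{lem:tauv}). Substituting these and using additivity of $\sigma$, the previous display becomes
\[
\sigma\bigl([\Z[S]] - [\cO_{F,S}^\times] + [\Cl_{F,S}]\bigr) = \sigma\bigl([\Z[S']] - [\cO_{F,S'}^\times] + [\Cl_{F,S'}]\bigr).
\]
Because $\sigma$ is an automorphism of $\G_{0}(\Z[G])$ (Section~\ref{sec:duals}), hence injective, applying $\sigma^{-1}$ gives exactly the asserted identity. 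I do not anticipate any genuine obstacle here: the only points needing a little care are the bookkeeping of the $\iota$-twists and the identification of $\G_{0}(\Z[G]^{\op})$ with $\G_{0}(\Z[G])$, both of which were already set up in Sections~\ref{sec:duals} and~\ref{sec:Arakelov}.

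As a cross-check — and an argument that bypasses both Proposition~\ref{prop:Ara} and the operator $\sigma$ — one may instead reduce to the case $S \subseteq S'$ by replacing $S'$ with the finite $G$-stable set $S \cup S'$, then write $S' \setminus S$ as a disjoint union of $G$-orbits and handle one orbit $O$ at a time. For such a pair there is an exact sequence of $\Z[G]$-modules
\[
0 \longrightarrow \cO_{F,S}^\times \longrightarrow \cO_{F,S\cup O}^\times \longrightarrow \Z[O] \longrightarrow \Cl_{F,S} \longrightarrow \Cl_{F,S\cup O} \longrightarrow 0,
\]
where the middle map sends a unit to the divisor formed by its valuations at the places in $O$; taking alternating sums of classes in $\G_{0}(\Z[G])$ and using $[\Z[S\cup O]] = [\Z[S]] + [\Z[O]]$ produces precisely the desired relation for the pair $(S, S\cup O)$, and transitivity finishes the argument.
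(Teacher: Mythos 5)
Your primary argument is exactly the paper's proof: apply Proposition~\ref{prop:Ara} to both $S$ and $S'$, use $\sigma[\Z[S]]=[\Z[S]]$ and $\sigma[\Z[S']]=[\Z[S']]$, and remove the automorphism $\sigma$; nothing is missing. Your ``cross-check'' is a genuinely different and also correct route: the five-term exact sequence
\[
0 \longrightarrow \cO_{F,S}^\times \longrightarrow \cO_{F,S\cup O}^\times \longrightarrow \Z[O] \longrightarrow \Cl_{F,S} \longrightarrow \Cl_{F,S\cup O} \longrightarrow 0
\]
(valuations at the places of the orbit $O$, then the classes of the corresponding primes) is the standard comparison of $S$-units and $S$-class groups, and taking alternating sums in $\G_0(\Z[G])$ orbit by orbit, after reducing to $S\subseteq S'$ via $S\cup S'$, does yield the identity directly. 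That second argument buys independence from the Arakelov-theoretic Proposition~\ref{prop:Ara} and from the duality operator $\sigma$ altogether, whereas the paper's route gets the corollary for free once Proposition~\ref{prop:Ara} is in place.
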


\begin{proof}
The result follows by combining Proposition \ref{prop:Ara} with the observation that
$\involGrothen[\Z^S] = [\Z^S]$ and $\involGrothen[\Z^{S'}] = [\Z^{S'}]$.
\end{proof}\noindent
As we will see in the next section, the $\Omega(3)$ conjecture, a standard conjecture
in the theory of Galois module structures, implies the following simpler expressions for
the classes of $\Ar_F$ and $\Ara{F}$.
\begin{conjecture}\label{conj:main}
Let $F/K$ be a finite Galois extension of number fields, let $G$ be the Galois group,
let $d$ be the degree of $K$ over $\Q$, let $S_{\infty}$ be the set of
Archimedean places of $F$, let $\mu_F$ be the group of roots of unity in $F$,
and let $\involGrothen$ be the automorphism of $\G_{0}(\Z[G])$ induced by the involution
$g\mapsto g^{-1}$ on $\Z[G]$, as defined in Section \ref{sec:duals}.
Then the following equalities hold in $\G_{0}(\Z[G])$:
\begin{enumerate}[leftmargin=*, label=\upshape{(\alph*)}]
  \item\label{item:conjmain1} $[\Ara{F}] = d\cdot [\Z[G]] - [\Z]$;
  \item\label{item:conjmain2} $[\Ar_F] = [\Z^{S_{\infty}}] - [\Z] - \involGrothen[\mu_F]$.
\end{enumerate}
\end{conjecture}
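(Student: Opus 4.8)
The plan is to reduce Conjecture~\ref{conj:main} to Chinburg's $\Omega(3)$ conjecture modulo the kernel group (Conjecture~\ref{conj:omega-3-over-max-order}), in two steps: first, to prove that parts~\ref{item:conjmain1} and~\ref{item:conjmain2} are equivalent to one another, using only the machinery already assembled; and second, to deduce part~\ref{item:conjmain1} from Conjecture~\ref{conj:omega-3-over-max-order}. The second step is exactly the content of Theorem~\ref{thm:main} and will be carried out in Section~\ref{sec:Chinburg}, so the genuinely self-contained part here is the equivalence, which I describe first.

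For the equivalence, I would turn the defining description of $\Pic^0_F$ as the quotient of $\PicTilde{F}$ by the image of $\{1\}\times\comp(F_\R^\times)$ into a short exact sequence. The kernel of the induced map $\comp(F_\R^\times)\to\PicTilde{F}$ consists of the units of $\cO_F$ whose image in $F_\R^\times$ lies in $\comp(F_\R^\times)$, that is, of the roots of unity; so there is a short exact sequence of compact $\Z[G]$-modules
\[
0\longrightarrow \comp(F_\R^\times)/\mu_F\longrightarrow \PicTilde{F}\longrightarrow \Pic^0_F\longrightarrow 0.
\]
Taking Pontryagin duals gives $0\to\Ar_F\to\Ara{F}\to\Hom_{\cts}(\comp(F_\R^\times)/\mu_F,\R/\Z)\to 0$, and dualising $0\to\mu_F\to\comp(F_\R^\times)\to\comp(F_\R^\times)/\mu_F\to 0$, using that $\mu_F$ is finite, gives $[\Hom_{\cts}(\comp(F_\R^\times)/\mu_F,\R/\Z)]=[\Hom_{\cts}(\comp(F_\R^\times),\R/\Z)]-[\mu_F^\lor]$ in $\G_0(\Z[G])$. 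Combining these with Lemma~\ref{lem:tauv} and with Proposition~\ref{prop:dual} (which gives $[\mu_F^\lor]=-\sigma[\mu_F]$) yields the identity
\[
[\Ara{F}]=[\Ar_F]+d\cdot[\Z[G]]-[\Z[S_\infty]]+\sigma[\mu_F]\qquad\text{in }\G_0(\Z[G]).
\]
Substituting the value of $[\Ara{F}]$ predicted by~\ref{item:conjmain1} into this identity produces exactly~\ref{item:conjmain2}, and the substitution is reversible; hence~\ref{item:conjmain1} and~\ref{item:conjmain2} are equivalent.

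It remains to establish~\ref{item:conjmain1}. By Corollary~\ref{cor:indepS} one may enlarge $S$, so pick a finite $G$-stable $S\supseteq S_\infty$ with $\Cl_{F,S}$ trivial; then Corollary~\ref{cor:Ara}, together with $\sigma[\Z[S]]=[\Z[S]]$ and $\sigma[\Z]=[\Z]$, shows that~\ref{item:conjmain1} is equivalent to the Grothendieck-group identity $[\cO^\times_{F,S}]=[\Z[S]]-[\Z]$, whose right-hand side is the class of the augmentation kernel $\Delta S=\ker(\Z[S]\to\Z)$. I would then invoke the Tate sequence of $F/K$ relative to $S$, a four-term exact sequence $0\to\cO^\times_{F,S}\to A\to B\to\Delta S\to 0$ with $A$, $B$ finitely generated and cohomologically trivial over $\Z[G]$; additivity in $\G_0(\Z[G])$ rewrites the target identity as $[A]=[B]$. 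Since the two outer terms become isomorphic after tensoring with $\Q$, the Euler characteristic $\chi(A)-\chi(B)$ lies in the locally free class group $\Cl(\Z[G])$ and is, up to sign, Chinburg's invariant $\Omega(F/K,3)$; Conjecture~\ref{conj:omega-3-over-max-order} places this invariant in the kernel group $D(\Z[G])$, while a localisation argument shows that the Cartan map $\Cl(\Z[G])\to\G_0(\Z[G])$ kills $D(\Z[G])$, giving $[A]=[B]$. The hard part is this last step: matching $\chi(A)-\chi(B)$ precisely with $\Omega(F/K,3)$ — keeping track of the hypotheses on $S$ for which a Tate sequence is available, of the passage between $F/K$ and $F/\Q$, which is where the factor $d=[K:\Q]$ enters, and of sign and normalisation conventions — together with the verification that $D(\Z[G])$ maps to zero in $\G_0(\Z[G])$. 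Everything else is formal manipulation of the exact sequences set up in Sections~\ref{sec:duals} and~\ref{sec:Arakelov}, and the full argument for this reduction is given in Section~\ref{sec:Chinburg}.
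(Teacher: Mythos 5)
Your overall route is the same as the paper's: the statement is a conjecture, and what can actually be established is (i) the equivalence of \ref{item:conjmain1} and \ref{item:conjmain2}, and (ii) the deduction of \ref{item:conjmain1} from Conjecture \ref{conj:omega-3-over-max-order}. Your proof of (i) is correct and is a mild variant of the paper's: you dualise the defining sequence $0\to\comp(F_\R^\times)/\mu_F\to\PicTilde{F}\to\Pic^0_F\to 0$ directly (the kernel computation via Kronecker's theorem is fine, and the paper itself records that the preimage of $\{1\}\times\comp(F_\R^\times)$ in $F^\times$ is $\mu_F$), then use Lemma \ref{lem:tauv} and Proposition \ref{prop:dual}; the paper instead combines Schoof's sequence $0\to(\cO_F^\times/\mu_F)\otimes\R/\Z\to\Pic^0_F\to\Cl_F\to 0$ with Proposition \ref{prop:Ara} at $S=S_\infty$. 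Both yield exactly \eqref{eq:AraMinusAr}, and your version is arguably more self-contained. Two small remarks on (ii): the factor $d$ has nothing to do with any "passage between $F/K$ and $F/\Q$" -- it enters through Lemma \ref{lem:tauv} and is already absorbed in Corollary \ref{cor:Ara}, and the Tate sequence is taken for $F/K$ itself; and the set $S$ should be chosen to satisfy Tate's conditions (i)--(iii), which (taking $N=F$ in (iii)) automatically force $\Cl_{F,S}$ to be trivial, so the same $S$ serves both for Corollary \ref{cor:Ara} and for the Tate sequence.

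The genuine gap is in your final step: you assert that Conjecture \ref{conj:omega-3-over-max-order} "places $\Omega(F/K,3)$ in the kernel group $\D(\Z[G])$". It does not: it asserts $\Omega(F/K,3)\equiv W_{F/K}\bmod \D(\Z[G])$, where the root number class $W_{F/K}$ is in general a non-trivial $2$-torsion element of $\Cl(\Z[G])$ (it vanishes only when $G$ has no irreducible symplectic characters). Since the Cartan map $\mu\colon\K_0(\Z[G])\to\G_0(\Z[G])$ does not annihilate all of $\Cl(\Z[G])$ -- it factors through $\Cl(\cM)$, whose image in $\G_0(\Z[G])$ can be non-trivial torsion, as the groups $\G_0(R_\chi)_{\tors}$ in the proof of Proposition \ref{prop:concrete-cases-conj:main} illustrate -- knowing $\Omega(F/K,3)\equiv W_{F/K}\bmod\D(\Z[G])$ and $\mu(\D(\Z[G]))=0$ is not enough to conclude $[A]=[B]$. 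You need the additional input, used in the paper, that $\mu(W_{F/K})=0$; this is Queyrut's result \cite[Proposition 2.3]{MR769765}. With that, and with the factorisation of $\mu$ through $\rho$ (which the paper gets from $[\Z[G]]-[L]=[\cM]-[\cM\otimes_{\Z[G]}L]$ in $\G_0(\Z[G])$, \cite[Exercise 31.10]{MR632548} -- this is the "localisation argument" you left unspecified), your argument closes exactly as in Proposition \ref{prop:omega-3-over-max-order-implies-conj:main}.
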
\noindent
The next result shows that, in the two equations in Conjecture \ref{conj:main},
the difference between the left hand side and the right hand side is the same.
Hence each of \ref{item:conjmain1} and \ref{item:conjmain2} implies
the other, and is therefore equivalent to the entire conjecture.
\begin{lemma}\label{lem:equiv-assertions-mc-torsion}
  With the same notation as in Conjecture \ref{conj:main},
we have
\[
  [\Ara{F}]-[\Ar_F] = d \cdot [\Z[G]] - [\Z^{S_{\infty}}] + \involGrothen[\mu_F]
\]
in $\G_0(\Z[G])$.
Moreover, the common difference between the left hand side and the right hand side
of the two equations in Conjecture \ref{conj:main}
lies in the torsion subgroup $\G_0(\Z[G])_{\tors}$
of $\G_0(\Z[G])$.
\end{lemma}

\begin{proof}
  The first assertion follows from Proposition \ref{prop:Ara} with $S=S_{\infty}$.
  
We now prove the second assertion. By a theorem of Herbrand
there is an isomorphism
$(\Q\otimes_{\Z}\cO_F^\times) \oplus \Q \cong\Q^{S_{\infty}}$ of $\Q[G]$-modules,
see for example \cite[Chapter I, 4.3]{MR782485}.
Let $\theta: \G_0(\Z[G]) \rightarrow \G_0(\Q[G])$ be the map induced by 
the flat ring homomorphism $\Z[G] \rightarrow \Q[G]$.  
For every finitely generated $\Z[G]$-module $M$ we have $\theta(\involGrothen[M])=\theta([M])$.
Therefore applying $\theta$ to the first equality of Proposition \ref{prop:Ara}, we deduce
the equality
\[
  [\Q\otimes_{\Z} \Ar_F] - [\Q^{S_{\infty}}]+[\Q] = [\Q\otimes_{\Z} \cO_F^\times]- [\Q^{S_{\infty}}]+[\Q] = 0
\]
in $\G_0(\Q[G])$.
This shows that the difference between the left hand side and
the right hand side of Conjecture \ref{conj:main}\ref{item:conjmain2} lies in $\ker(\theta)$, 
which is equal to $\G_0(\Z[G])_{\tors}$ by \cite[Theorem (39.14)]{MR892316}.
The conclusion for Conjecture \ref{conj:main}\ref{item:conjmain1} follows from this and 
the first assertion.
\end{proof}
\noindent
For every ring $R$ that is flat over $\Z$, the functor
$R\otimes_{\Z}\bullet$ induces a homomorphism
$\G_0(\Z[G])\to \G_0(R[G])$. Hence, Conjecture \ref{conj:main} implies
the following conjecture, which will suffice for the applications to the
Cohen--Lenstra--Martinet heuristic in Section \ref{sec:CL}.

\begin{conjecture}\label{conj:main-for-CL}
Let $F/K$ be a finite Galois extension of number fields, let $G$ be the Galois group,
let $d$ be the degree of $K$ over $\Q$,
and let $P$ be a set of prime numbers not dividing $2 \cdot \# G$.
Then the equalities
\begin{align}\label{eq:ArAndArtilde}
  [\Z_{(P)}\otimes_{\Z}\Ara{F}] & = d\cdot [\Z_{(P)}[G]] - [\Z_{(P)}],\nonumber\\
  [\Z_{(P)}\otimes_{\Z}\Ar_F] & = [(\Z_{(P)})^{S_\infty}] - [\Z_{(P)}] -
    \involGrothen[\Z_{(P)}\otimes_{\Z}\mu_F]
\end{align}
hold in $\G_0(\Z_{(P)}[G])$.
\end{conjecture}
\begin{remark}\label{rmk:main-for-CL-in-abs-abelian-case}
Conjecture \ref{conj:main-for-CL} is equivalent to an affirmative answer to
\cite[Question 5.5]{MR4105790}. Thus in the special case that $G$ is abelian
and $K=\Q$, Conjecture \ref{conj:main-for-CL} was proven unconditionally in 
\cite[Theorem 5.4]{MR4105790}.  
\end{remark}
\begin{remark}\label{rmk:G0-tors-vanishes}
Let $F/K$ be a finite Galois extension of number fields and let $G$ be the
Galois group. Lemma \ref{lem:equiv-assertions-mc-torsion} shows that if
$\G_{0}(\Z[G])_{\tors}$ vanishes, then Conjecture \ref{conj:main} holds for
$F/K$. There are many cases in which $\G_0(\Z[G])_{\tors}$ has been calculated,
and among these are numerous instances in which it is trivial. In the case
that $G$ is abelian, a formula for $\G_{0}(\Z[G])$ is given in
\cite{MR601683}; this result has been extended to the case that $G$ is
nilpotent in \cite{MR953163}. Analogous observations also apply to Conjecture
\ref{conj:main-for-CL}.
\end{remark}\noindent

\section{The relation to Chinburg's $\Omega(3)$ conjecture}\label{sec:Chinburg}
\noindent
In this section we explain that Conjecture \ref{conj:main}, and a fortiori
Conjecture \ref{conj:main-for-CL}, follow from Conjecture \ref{conj:omega-3-over-max-order}
below, which is a weaker variant of Chinburg's $\Omega(3)$ conjecture,
Conjecture \ref{conj:omega-3} \cite[Conjecture 3.1]{MR786352}.

We first briefly review some material from algebraic $\K$-theory. 
We refer the reader to \cite[\S 38, \S 39, \S 49]{MR892316} for further details.

Let $R$ be a Dedekind domain and let $\Lambda$ be an $R$-order.
Let $\K_{0}(\Lambda)$ denote the Grothendieck group of the category
of finitely generated projective $\Lambda$-modules. By definition,
$\K_{0}(\Lambda)$ is the additive group generated by expressions $[P]$,
one for each isomorphism class of finitely generated projective
$\Lambda$-modules $P$, with relations $[P_{1} \oplus P_{2}] = [P_{1}] + [P_{2}]$ 
for all such modules $P_{1}, P_{2}$.
By \cite[Theorem (38.50)]{MR892316}, the group $\K_{0}(\Lambda)$ can be identified with
the Grothendieck group of the category of 
finitely generated $\Lambda$-modules of finite projective dimension.

For each maximal ideal $\fp$ of $R$, let $R_{\fp}$ denote the localisation of
$R$ at $\fp$, and define $\Lambda_{\fp} = R_{\fp} \otimes_{R} \Lambda$. 
A $\Lambda$-lattice $M$ is said to be \emph{locally free} if 
$\Lambda_{\fp} \otimes_{\Lambda} M$
is free over $\Lambda_{\fp}$ for every such $\fp$.
Note that every locally free $\Lambda$-lattice is projective by \cite[Proposition (8.19)]{MR632548}. 
Let $\K_{0}(\Lambda) \rightarrow \K_{0}(\Lambda_{\fp})$ be the map induced by
the ring homomorphism $\Lambda \rightarrow \Lambda_{\fp}$. Then the
\emph{locally free class group} $\Cl(\Lambda)$ is defined to be the kernel of
the homomorphism $\K_{0}(\Lambda) \rightarrow \prod_{\fp} \K_{0}(\Lambda_{\fp})$, 
where the product runs over all maximal ideals $\fp$ of $R$ (see \cite[Definition (39.12)]{MR892316}).
By \cite[(39.13)]{MR892316} we have
\begin{equation}\label{eq:locally-free-classgroup}
  \Cl(\Lambda) = \{ [\Lambda] - [L] \in \K_{0}(\Lambda) : L\textrm{ is a locally free }\Lambda\textrm{-lattice of rank }1\}. 
\end{equation}
Note that there are several equivalent definitions of $\Cl(\Lambda)$
(see \cite[\S 49A]{MR892316}, particularly \cite[p.\ 223]{MR892316}).

We now recall the statement of Chinburg's $\Omega(3)$ conjecture.
For the rest of the section,
let $F/K$ be a finite Galois extension of number fields and let $G$ be the Galois group.
For any finite $G$-stable set $S$ of places of $F$, let $X_{S}$ be the kernel of the
augmentation map $\Z^S \rightarrow \Z$ of $\Z[G]$-lattices.
Henceforth let $S$ be a finite $G$-stable set of places of $F$ such that 
\begin{enumerate}[leftmargin=*, label={(\roman*)}]
\item $S$ contains the set $S_{\infty}$ of Archimedean places of $F$, 
\item $S$ contains the ramified places of $F/K$, and
\item for every subfield $N$ of $F$ containing $K$, the ideal class group of
  $N$ is generated by the classes $\{[\fp\cap \cO_N] :\fp\in S\setminus S_{\infty}\}$.
\end{enumerate}
Tate \cite[p. 711]{MR0207680} defined a canonical class
$\alpha = \alpha_{S} \in \Ext_{\Z[G]}^{2}(X_{S}, \cO_{F,S}^{\times})$,
and showed the existence of so-called Tate sequences \cite[II, Th\'eor\`eme 5.1]{MR782485}, that is,
four term exact sequences of finitely generated $\Z[G]$-modules
\begin{equation}\label{eqn:tate-seq}
0 \longrightarrow \cO_{F,S}^{\times} \longrightarrow A \longrightarrow B \longrightarrow X_{S} \longrightarrow 0
\end{equation}
representing $\alpha$, where $A$ and $B$ are of finite projective dimension.
In \cite{MR786352}, Chinburg defined $\Omega(F/K,3) = [A]-[B] \in \K_{0}(\Z[G])$.
Moreover, he showed that $\Omega(F/K,3)$ lies in the locally free class group $\Cl(\Z[G])$, and
depends only on the extension $F/K$; in particular, it does not depend on the
choice of $S$ or on the choice of exact sequence \eqref{eqn:tate-seq}.

The root number class $W_{F/K} \in \Cl(\Z[G])$ was defined by Ph.\ Cassou-Nogu\`es
in the case that $F/K$ is at most tamely ramified,
and was generalised to wildly ramified extensions $F/K$ by Fr\"ohlich \cite{MR507603}.
It is an element of order at most $2$, and is defined in terms of the Artin root numbers of
the irreducible symplectic characters of $G$.
Moreover, if $G$ has no irreducible symplectic characters (for example, if
$G$ is abelian or of odd order), then $W_{F/K}$ is trivial by definition.

\begin{conjecture}[Chinburg's $\Omega(3)$ conjecture]\label{conj:omega-3}
The equality 
\[
\Omega(F/K,3)=W_{F/K}
\]
holds in $\Cl(\Z[G])$.
\end{conjecture}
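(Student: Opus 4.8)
\noindent
Conjecture \ref{conj:omega-3} is one of the deepest open problems in equivariant Galois module theory, and there is no prospect of settling it here unconditionally in full generality. The plan is instead twofold: to describe the mechanism by which $\Omega(F/K,3)$ and $W_{F/K}$ are to be compared in $\Cl(\Z[G])$, and then to isolate where that mechanism can be carried through unconditionally, which is exactly what is needed for the special cases recorded in Proposition \ref{prop:concrete-cases-conj:main}. Throughout I would work with the weaker ``modulo the kernel group'' form, Conjecture \ref{conj:omega-3-over-max-order}: the kernel group $D(\Z[G])\subseteq\Cl(\Z[G])$ is by definition the kernel of the natural map to the locally free class group $\Cl(\mathfrak{M})$ of a maximal $\Z$-order $\mathfrak{M}\supseteq\Z[G]$ in $\Q[G]$, and in $\Cl(\mathfrak{M})$ classes are detected by reduced norms on the Wedderburn components of $\Q[G]$, equivalently --- by Fr\"ohlich's Hom-description --- by ratios of character-valued invariants. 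So the target becomes a comparison of such invariants.

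The key steps, in order, would be as follows. First, fix a Tate sequence \eqref{eqn:tate-seq} for the chosen $S$, so that $\Omega(F/K,3)=[A]-[B]$ with $A$ and $B$ of finite projective dimension. Second, absorb the permutation module $\Z[S]$ by splicing with the tautological sequence $0\to X_S\to\Z[S]\to\Z\to 0$, so as to rewrite $[A]-[B]$ as an alternating sum of classes of cohomologically trivial modules. Third, compute the image of that class in $\Cl(\mathfrak{M})$ by means of the analytic description of the Tate canonical class, due to Chinburg and refined by Gruenberg--Ritter--Weiss: once the contributions of $\cO_{F,S}^\times$ and $X_S$ have been accounted for, the class is expressed through the leading terms at $s=0$ of the $S$-truncated Artin $L$-functions $L_S(s,\chi)$, and, via their functional equations, reduces to the product of the local $\varepsilon$-constants. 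Fourth, feed this into the Hom-description and compare with the definition of $W_{F/K}$: the non-symplectic characters should contribute trivial classes, the symplectic ones the Artin root numbers, and the surviving contribution is then visibly $W_{F/K}$. The identity $\Omega(F/K,3)=W_{F/K}$ modulo $D(\Z[G])$ is the assertion that these two computations agree.

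The cases needed for Proposition \ref{prop:concrete-cases-conj:main} are reached as follows. If $F/K$ is at most tamely ramified, all the local $\varepsilon$-constants in step three are classical Galois Gauss sums, so that step four reduces to Fr\"ohlich's conjecture on the Galois module structure of rings of integers; by Taylor's theorem this comparison holds, which proves Conjecture \ref{conj:omega-3} in the tame case. Part \ref{item:evidence1}, where $F/\Q$ is abelian, then follows in the tame case, and in general from the known validity of Chinburg's $\Omega$-conjecture for abelian extensions of $\Q$ (ultimately a consequence of the Iwasawa Main Conjecture, via the equivariant Tamagawa number conjecture) applied to $F/\Q$, together with the compatibility of the Chinburg invariant with restriction of the base field. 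For part \ref{item:evidence2} one works with a set $P$ of primes not dividing $2\#G$, so that $\Z_{(P)}[G]$ is already a maximal $\Z_{(P)}$-order and the root number class maps to $0$; the hypotheses $F^{G'}/\Q$ abelian and $\#G<112$ are arranged precisely so that, running through the finitely many isomorphism types of $G$, the image of $\Omega(F/K,3)$ in each Wedderburn component of $\Q[G]$ either vanishes for class-number reasons or is pinned down by the abelian case applied to the maximal abelian subextension.

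The main obstacle is wild ramification, and it lives in step three: at a wildly ramified prime $\fp$ the local contribution to the reduced-norm class of $\Omega(F/K,3)$ is an equivariant local $\varepsilon$-constant whose Galois module-theoretic meaning is itself the subject of open conjectures --- the local $\varepsilon$-constant conjectures of Fr\"ohlich, Chinburg--Pappas--Taylor, Breuning, and others --- and it is precisely here that Conjecture \ref{conj:omega-3} passes beyond the reach of the Fr\"ohlich--Taylor machinery. Passing to the quotient by the kernel group, as in Conjecture \ref{conj:omega-3-over-max-order}, removes the subtle $D(\Z[G])$-ambiguity but does nothing about these wild contributions, which is why the unconditional assertions in Proposition \ref{prop:concrete-cases-conj:main} remain confined to the tame case, to $F^{G'}/\Q$ abelian, and to the small-degree enumeration.
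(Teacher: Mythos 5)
This statement is a conjecture, not a theorem: the paper proves nothing here, and simply states Chinburg's $\Omega(3)$ conjecture, citing Chinburg's original formulation. You correctly recognize this, and your strategic overview --- the Hom-description, reduced-norm classes, $\varepsilon$-constants, and the role of wild ramification as the essential obstruction --- is reasonable background discussion rather than a proof, which is the appropriate response to a genuinely open problem.

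However, one concrete claim in your discussion is wrong: you assert that Taylor's theorem settles Conjecture \ref{conj:omega-3} in the tamely ramified case by reduction to Fr\"ohlich's conjecture. Taylor's theorem establishes, in Chinburg's language, the equality $\Omega(F/K,2)=W_{F/K}$ for tame $F/K$ (the ring-of-integers invariant). Passing from $\Omega(2)$ to $\Omega(3)$ requires the vanishing of $\Omega(F/K,1)=\Omega(F/K,2)-\Omega(F/K,3)$, which is itself a separate open conjecture of Chinburg; the tame case of $\Omega(3)$ does not follow from Taylor's work, and the paper makes no such claim. The only proven instance the paper records is Theorem \ref{thm:omega3-in-abs-abelian-setting}, quoted from \cite[Corollary 1.4]{MR2290586}, for $F/\Q$ abelian. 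Your sketch of Proposition \ref{prop:concrete-cases-conj:main}\ref{item:evidence2} is also looser than what the paper actually does: there the argument is not via local $\varepsilon$-constants at all, but via the Wedderburn decomposition of the maximal order $\Z_{(P)}[G]$, reduction of the abelianization factor to part \ref{item:evidence1}, and a direct \textsc{Magma} check that the relevant (narrow) class group quotients of the centres $Z_\chi$ are trivial for $\#G<112$.
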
\noindent
Fix, for the rest of the section, a maximal order $\cM$ in $\Q[G]$ containing $\Z[G]$,
let $\rho\colon \Cl(\Z[G])\to \Cl(\cM)$ be the map
induced by the ring homomorphism $\Z[G] \rightarrow \cM$, and define
the \emph{kernel subgroup} $\D(\Z[G])$ of $\Cl(\Z[G])$ to be the kernel of $\rho$.
If $\cM'$ is any other maximal order in $\Q[G]$ containing $\Z[G]$,
and $\rho'\colon \Cl(\Z[G])\to \Cl(\cM')$ is the analogous map,
then by \cite[Theorem 1.6]{MR2905910} 
the kernel of $\rho$ is equal to that of $\rho'$.

\begin{conjecture}[Chinburg's $\Omega(3)$ conjecture modulo the kernel group]\label{conj:omega-3-over-max-order}
We have $\Omega(F/K,3) \equiv W_{F/K} \bmod \D(\Z[G])$. 
\end{conjecture}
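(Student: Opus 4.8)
The statement is implied by the full $\Omega(3)$ conjecture (Conjecture \ref{conj:omega-3}), since $\D(\Z[G])\subseteq\Cl(\Z[G])$; so one legitimate route is simply to prove that. The point of the weakening is that $\D(\Z[G])$ is by construction the kernel of $\rho\colon\Cl(\Z[G])\to\Cl(\cM)$, so Conjecture \ref{conj:omega-3-over-max-order} is equivalent to the single equality $\rho(\Omega(F/K,3))=\rho(W_{F/K})$ in $\Cl(\cM)$, and the class group of a maximal order is both more explicit and coarser. Writing $\cM=\prod_i\cM_i$ for the decomposition into maximal orders in the Wedderburn components $A_i$ of $\Q[G]$, the Eichler--Swan description identifies each $\Cl(\cM_i)$ with a ray class group of the centre of $A_i$; so the plan is to compute both sides of this equality componentwise and compare.

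For the right-hand side I would use the construction of $W_{F/K}$ via Fr\"ohlich's Hom-description of $\Cl(\Z[G])$: it is the class of an explicit $\Omega_\Q$-equivariant function on the virtual characters of $G$ whose value at an irreducible $\chi$ is a sign built from the Artin root number $W(\chi)$, designed to be trivial when $\chi$ is orthogonal or not self-dual. Hence the $A_i$-component of $\rho(W_{F/K})$ is trivial unless $A_i$ is of symplectic type, in which case it is an element of order dividing $2$ in the narrow class group of the (totally real) centre of $A_i$, recording the signs of the root numbers of the symplectic characters attached to $A_i$.

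For the left-hand side I would fix a Tate sequence \eqref{eqn:tate-seq} --- legitimate, since $\Omega(F/K,3)$ does not depend on its construction --- and extend scalars along $\Z[G]\to\cM$. Because $\cM$ is hereditary, all four resulting $\cM$-modules have finite projective dimension, so additivity of the Euler characteristic along the four-term exact sequence gives $\rho(\Omega(F/K,3))=[\cM\otimes_{\Z[G]}\cO_{F,S}^\times]-[\cM\otimes_{\Z[G]}X_S]$ in $\K_0(\cM)$, and Dirichlet's unit theorem equates the two rank vectors, so this difference already lies in $\Cl(\cM)$. Using the exact sequences $0\to\mu_F\to\cO_{F,S}^\times\to\cO_{F,S}^\times/\mu_F\to 0$ and $0\to X_S\to\Z[S]\to\Z\to 0$, and the independence of $\Omega(F/K,3)$ from $S$, this reduces to expressing $\rho(\Omega(F/K,3))$ in terms of the $S$-unit lattice modulo torsion, the module $\mu_F$, and permutation lattices, and then computing the image of each of these in $\Cl(\cM)$ component by component: the upshot is that on each $A_i$ one is comparing the Steinitz class of $\cM_i\otimes(\cO_{F,S}^\times/\mu_F)$ with that of the corresponding augmentation lattice, corrected by an explicit cyclotomic term.

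The genuinely hard part is then the matching in $\Cl(\cM)$ itself: on symplectic components it is an equality between a $\K$-theoretic invariant of the $S$-unit lattice and signs of Artin root numbers --- the arithmetic heart of Chinburg's $\Omega(3)$ conjecture --- and on the remaining components it is the vanishing of the corresponding invariant, neither of which is known in general; this is why the hypothesis is stated as a conjecture rather than proved. It does hold unconditionally when $F/\Q$ is abelian, where $G$ is abelian, $W_{F/K}=0$ and in fact $\Omega(F/K,3)=0$ by Theorem \ref{thm:omega3-in-abs-abelian-setting}; and the reduction above is what makes it feasible to verify the hypothesis in the concrete families of Proposition \ref{prop:concrete-cases-conj:main}, where, for $\#G<112$ and $F^{G'}/\Q$ abelian ($G'=[G,G]$), the relevant simple components and their invariants can be enumerated, with the abelian part handled by the compatibility of both invariants with passage to the maximal abelian subextension and the abelian case applied to $F^{G'}/K$, and only a short list of symplectic components remaining to be checked by hand.
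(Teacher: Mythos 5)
The statement you were asked about is a \emph{conjecture}: the paper does not prove Conjecture \ref{conj:omega-3-over-max-order}, and no proof of it (or of the stronger Conjecture \ref{conj:omega-3}) is known in general, so there is no proof in the paper to compare yours against. Your proposal correctly recognises this: you do not claim a proof, you observe that the statement follows from the full $\Omega(3)$ conjecture because $\D(\Z[G])$ is the kernel of $\rho\colon\Cl(\Z[G])\to\Cl(\cM)$, you reformulate it as $\rho(\Omega(F/K,3))=\rho(W_{F/K})$ in $\Cl(\cM)$ computed componentwise via the Hom-description and ray/narrow class groups of the centres, and you note the unconditional abelian case, which is exactly the paper's Theorem \ref{thm:omega3-in-abs-abelian-setting} (quoting Greither's result). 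That is a fair account of the status of the conjecture, though one technical caution: $\cM\otimes_{\Z[G]}-$ is not exact, so passing the four-term Tate sequence \eqref{eqn:tate-seq} through extension of scalars requires handling Tor terms (or arguing in $\G_0(\cM)$ via the hereditary property and resolutions); this is repairable but should not be waved through.

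One substantive inaccuracy in your final paragraph: the paper does \emph{not} verify Conjecture \ref{conj:omega-3-over-max-order} for the concrete families in Proposition \ref{prop:concrete-cases-conj:main}\ref{item:evidence2}. For $\#G<112$ with $F^{G'}\!/\Q$ abelian, the paper proves the weaker Conjecture \ref{conj:main-for-CL} directly: it reduces to showing $\Z_{(P)}\otimes\Psi(F/K)=0$, splits $\Z_{(P)}[G]$ (which is a maximal order since $P$ avoids divisors of $2\cdot\#G$) into the abelian quotient part, handled by part \ref{item:evidence1} applied to $F^{G'}\!/K$, and the non-abelian components $R_\chi$, and then checks by machine computation that every $\G_0(R_\chi)_{\tors}$ vanishes. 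No root numbers, symplectic invariants, or computations in $\Cl(\cM)$ for the full group ring enter; the argument bypasses Conjecture \ref{conj:omega-3-over-max-order} entirely in those cases, which is precisely why it works even where that conjecture remains open.
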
\noindent
The next result will be the main ingredient in the proof of Theorem \ref{thm:main} from the introduction.
\begin{proposition}\label{prop:omega-3-over-max-order-implies-conj:main}
Conjecture \ref{conj:omega-3-over-max-order} for $F/K$ implies Conjecture
\ref{conj:main} for $F/K$.
\end{proposition}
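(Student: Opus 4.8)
The plan is to relate $\Omega(F/K,3)$ to the difference between the two sides of Conjecture \ref{conj:main}\ref{item:conjmain1}, working in $\G_0(\Z[G])$, and to exploit the fact that this difference is torsion (Lemma \ref{lem:equiv-assertions-mc-torsion}). First I would choose a finite $G$-stable set $S$ of places of $F$ satisfying the three conditions (i)--(iii) required for a Tate sequence, and large enough that $\Cl_{F,S}$ is trivial; then Corollary \ref{cor:Ara} gives
\[
  [\Ara{F}] = d\cdot[\Z[G]] - \sigma\bigl([\Z[S]] - [\cO^\times_{F,S}]\bigr)
\]
in $\G_0(\Z[G])$. Since $[\Z[S]] = [X_S] + [\Z]$ and $\sigma[\Z[S]] = [\Z[S]]$, $\sigma[\Z] = [\Z]$ (permutation modules are self-dual), the statement $[\Ara{F}] = d\cdot[\Z[G]] - [\Z]$ is equivalent to the equality $\sigma[X_S] = \sigma[\cO^\times_{F,S}]$, i.e.\ to $[X_S] = [\cO^\times_{F,S}]$ in $\G_0(\Z[G])$.

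Next I would feed in the Tate sequence \eqref{eqn:tate-seq}: from $0\to \cO^\times_{F,S}\to A\to B\to X_S\to 0$ with $A$, $B$ of finite projective dimension, one gets $[\cO^\times_{F,S}] - [X_S] = [A] - [B]$ in $\G_0(\Z[G])$, where $[A]-[B]$ is the image of $\Omega(F/K,3)\in\Cl(\Z[G])\subset\K_0(\Z[G])$ under the Cartan map $c\colon \K_0(\Z[G])\to\G_0(\Z[G])$. Thus Conjecture \ref{conj:main}\ref{item:conjmain1} for $F/K$ is equivalent to $c(\Omega(F/K,3)) = 0$ in $\G_0(\Z[G])$. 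Now I would observe two things: first, $c(W_{F/K}) = 0$, because $W_{F/K}$ has order at most $2$ and, more to the point, lies in the kernel subgroup $\D(\Z[G])$ (indeed $W_{F/K}$ is a "difference of locally free ideals that become isomorphic over $\cM$"), so it remains to show that $c$ kills $\D(\Z[G])$; second, if Conjecture \ref{conj:omega-3-over-max-order} holds then $\Omega(F/K,3) - W_{F/K}\in\D(\Z[G])$, so $c(\Omega(F/K,3)) = c(W_{F/K}) + c(\text{something in }\D(\Z[G]))$, and it suffices to prove $c(\D(\Z[G])) = 0$.

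The crux of the argument — and the step I expect to be the main obstacle — is therefore the claim that the Cartan homomorphism $\K_0(\Z[G])\to\G_0(\Z[G])$ annihilates the kernel subgroup $\D(\Z[G])$. The natural approach is to use the compatibility of Cartan maps with the ring map $\Z[G]\to\cM$: there is a commutative square relating $\K_0(\Z[G])\to\G_0(\Z[G])$ and $\K_0(\cM)\to\G_0(\cM)$, the latter being an isomorphism since $\cM$ is a maximal order (hence regular). An element of $\D(\Z[G])$ maps to $0$ in $\K_0(\cM)$, hence to $0$ in $\G_0(\cM)$; one then needs that $\G_0(\Z[G])\to\G_0(\cM)$ is injective on the relevant part, or more directly that $\G_0(\Z[G])\cong\G_0(\cM)$ — which is true because $\Z[G]$ and $\cM$ have the same fraction-field algebra $\Q[G]$ and $\G_0$ of an order depends only on that algebra (localisation sequence arguments, or \cite[(39.14)]{MR892316}). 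Combining, $c$ factors through $\G_0(\cM)$ and hence kills $\ker(\K_0(\Z[G])\to\K_0(\cM)) = \D(\Z[G])$. Assembling the pieces — Corollary \ref{cor:Ara}, the Tate sequence, and the vanishing $c(\D(\Z[G])) = 0$ together with $c(W_{F/K}) = 0$ — yields $c(\Omega(F/K,3)) = 0$, which is Conjecture \ref{conj:main}\ref{item:conjmain1}, and by Lemma \ref{lem:equiv-assertions-mc-torsion} also \ref{item:conjmain2}.
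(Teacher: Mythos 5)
Your overall skeleton is the paper's: reduce Conjecture \ref{conj:main}\ref{item:conjmain1} to the vanishing of $[\cO_{F,S}^\times]-[X_S]$ in $\G_0(\Z[G])$ (the paper packages this as $\Psi(F/K)=0$ via Lemma \ref{lem:equiv-main-conj-and-main-conj-Chinburg-style} and Corollary \ref{cor:indepS}), identify that element with the Cartan image of $\Omega(F/K,3)$ via the Tate sequence, and then show the Cartan map kills $\D(\Z[G])$ and $W_{F/K}$. That reduction and the Tate-sequence step are fine. The problems are exactly at the two points you yourself flag as the crux, and there your argument does not go through as written. First, your proof that the Cartan map annihilates $\D(\Z[G])$ rests on the claim that $\G_0(\Z[G])\cong\G_0(\cM)$ because ``$\G_0$ of an order depends only on the fraction-field algebra''. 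That claim is false, and \cite[(39.14)]{MR632548} does not say it: it only identifies $\G_0(\Z[G])_{\tors}$ with the kernel of $\G_0(\Z[G])\to\G_0(\Q[G])$. There is not even a natural map $\G_0(\Z[G])\to\G_0(\cM)$, since $\cM\otimes_{\Z[G]}-$ is not exact; and $\G_0$ genuinely depends on the order (for abelian $G$, Lenstra's computation of $\G_0(\Z[G])$ produces class groups of rings of the form $\Z[\zeta_d,1/n]$, whereas $\G_0(\cM)$ produces $\Cl(\Z[\zeta_d])$). The correct mechanism, which is what the paper uses, is the identity $[\Z[G]]-[L]=[\cM]-[\cM\otimes_{\Z[G]}L]$ in $\G_0(\Z[G])$ for a locally free ideal $L$ (\cite[Exercise 31.10]{MR632548}); this shows that the Cartan map restricted to $\Cl(\Z[G])$ factors as $\Cl(\Z[G])\to\Cl(\cM)\to\G_0(\cM)\to\G_0(\Z[G])$, with the last arrow being restriction of scalars, so no injectivity statement is needed and $\D(\Z[G])=\ker\rho$ dies for free.

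Second, your claim that $c(W_{F/K})=0$ is not justified. ``Order at most $2$'' is irrelevant, since $\G_0(\Z[G])$ has torsion, and the parenthetical assertion that $W_{F/K}$ lies in $\D(\Z[G])$ is just a restatement of what you need, not an argument: from the definition available here ($W_{F/K}$ is built from Artin root numbers of symplectic characters), nothing about its position relative to $\D(\Z[G])$ is visible, and if it were known to lie in $\D(\Z[G])$ the conjecture would simply read $\Omega(F/K,3)\in\D(\Z[G])$. What is actually needed, and what the paper invokes, is a theorem of Queyrut (\cite[Proposition 2.3]{MR769765}) giving $\mu(W_{F/K})=0$ for the Cartan map $\mu$. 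So your proposal is the right strategy, but both external inputs at its core are replaced by assertions, one of them supported by a false general principle; to repair it, substitute the factorisation argument via \cite[Exercise 31.10]{MR632548} and cite Queyrut's result (after which the concluding steps, including passing from part \ref{item:conjmain1} to part \ref{item:conjmain2} via Lemma \ref{lem:equiv-assertions-mc-torsion}, are exactly as in the paper).
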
\noindent
In order to prove this result, we will make use of the following lemma. 
Let
\[
\Psi(F/K) = [\cO_{F}^{\times}] - [X_{S_{\infty}}] - [\Cl_{F}] \in \G_{0}(\Z[G]).
\]

\begin{lemma}\label{lem:equiv-main-conj-and-main-conj-Chinburg-style}
Conjecture \ref{conj:main} for $F/K$ is equivalent
to the vanishing of $\Psi(F/K)$, while Conjecture \ref{conj:main-for-CL} is equivalent
to the assertion that for all sets $P$ of prime numbers not dividing $2\cdot \#G$
the image of $\Psi(F/K)$ under the map $\G_0(\Z[G])\to \G_0(\Z_{(P)}[G])$ is $0$.
Moreover, we always have $\Psi(F/K) \in \G_{0}(\Z[G])_{\tors}$.   
\end{lemma}

\begin{proof}
By Lemma \ref{lem:equiv-assertions-mc-torsion}, Conjectures
\ref{conj:main}\ref{item:conjmain1} and \ref{conj:main}\ref{item:conjmain2} are
equivalent to each other. By Corollary \ref{cor:Ara}, Conjecture \ref{conj:main}\ref{item:conjmain1}
is equivalent to the assertion that we have
\[
[\Z] = \involGrothen([\Z^S]-[\cO^\times_{F,S}])
\]
in $\G_{0}(\Z[G])$. Since $\involGrothen[\Z]=[\Z]$, this is equivalent to 
\[
[\cO^\times_{F,S}] = [\Z^S] - [\Z] = [X_{S}].
\]
By Corollary \ref{cor:indepS} with $S'=S_{\infty}$, this in turn is equivalent to $\Psi(F/K)=0$.

The proof of the claim regarding Conjecture \ref{conj:main-for-CL} is completely analogous.

The last claim follows from Lemma \ref{lem:equiv-assertions-mc-torsion}. 
\end{proof}\noindent
In light of Lemma \ref{lem:equiv-main-conj-and-main-conj-Chinburg-style}, Proposition
\ref{prop:omega-3-over-max-order-implies-conj:main} amounts to the statement that
Conjecture \ref{conj:omega-3-over-max-order} implies the vanishing of $\Psi(F/K)$.
This statement is well known to experts in Galois module theory;
see \cite[III]{MR724009}, \cite[\S 4, Proposition 6]{MR1110391} or \cite[\S 1]{MR1641555}.
We will recall the argument in the following proof and give some additional references.

\begin{proof}[Proof of Proposition \ref{prop:omega-3-over-max-order-implies-conj:main}.]
Let $\mu : \Cl(\Z[G]) \rightarrow \G_{0}(\Z[G])$ denote the restriction of the Cartan map
$\K_{0}(\Z[G]) \rightarrow \G_{0}(\Z[G])$,
which is induced by letting $\mu([M])=[M]$ if $M$ is a finitely generated projective $\Z[G]$-module.

Let $\xi \in \Cl(\Z[G])$.
Write $\xi = [\Z[G]] - [L]$, where $L$ is a locally free left ideal of $\Z[G]$,
which we may do by \eqref{eq:locally-free-classgroup}.
By \cite[Exercise 31.10]{MR632548} we have
\[
\xi = [\Z[G]] - [L] = [\cM] - [\cM \otimes_{\Z[G]} L] \textrm{ in } \G_{0}(\Z[G]).
\]
Hence we have the following commutative diagram of abelian groups:
\[
  \xymatrix{
   \Cl(\Z[G]) \ar[d]_{\rho}\ar[r]^{\mu} & \G_{0}(\Z[G]) \\
    \Cl(\cM) \ar[r]^{\mu'} & \G_{0}(\cM) \ar[u]^{\alpha},\\
  }
\]
where $\alpha$ is induced by restriction, and $\mu'$ is defined analogously to $\mu$.

Let $A$ and $B$ be as in the exact sequence \eqref{eqn:tate-seq}.
Then it follows from  the definition of $\Omega(F/K,3)$, from the exact sequence
\eqref{eqn:tate-seq}, and from Corollary \ref{cor:indepS} with $S'=S_{\infty}$, that
we have the equalities
\[
\mu(\Omega(F/K,3)) = [A] - [B] = [\cO_{F,S}^{\times}] - [X_{S}] = \Psi(F/K).
\]
Moreover, a special case of a result of Queyrut \cite[Proposition 2.3]{MR769765} shows that $\mu(W_{F/K})=0$.

Now suppose that Conjecture \ref{conj:omega-3-over-max-order} holds for $F/K$.
By definition of $\D(\Z[G])$, this is equivalent to $\rho(\Omega(F/K,3))=\rho(W_{F/K})$. 
Since $\mu$ factors via $\rho$, we have
\[
\Psi(F/K) = \mu(\Omega(F/K,3)) = \mu(W_{F/K}) = 0.
\]
The result now follows from Lemma \ref{lem:equiv-main-conj-and-main-conj-Chinburg-style}.
\end{proof}\noindent
We close the section by proving Theorem \ref{thm:main} from the introduction.
In fact, we prove the following stronger statement.
\begin{theorem}\label{thm:mainStronger}
  Let $F/K$ be a Galois extension of number fields, let $G$ be the Galois group,
  let $S_{\infty}$ be the $G$-set of Archimedean places of $F$, and let
  $\Lambda$ be the quotient of the group ring $\Z[\tfrac{1}{2\cdot \#G}][G]$
  by the two-sided ideal generated by $\sum_{g\in G}g$. Suppose that
  Conjecture \ref{conj:omega-3-over-max-order} holds for $F/K$, and that for every
  prime number $p$ not dividing $2\cdot\#G$, each primitive $p$-th root of unity
  in $F$ is in $K$. Then the equality
  \[
    [\Lambda\otimes_{\Z[G]}\Ar_F] - [\Lambda\otimes_{\Z[G]} \Z^{S_\infty}] = 0
  \]
  holds in $\G_{0}(\Lambda)_{\tors}$.
\end{theorem}
\begin{proof}
  Note that the fact that $[\Lambda\otimes_{\Z[G]}\Ar_F] - [\Lambda\otimes_{\Z[G]} \Z^{S_\infty}]$
  indeed lies in $\G_{0}(\Lambda)_{\tors}$ follows from Lemma \ref{lem:equiv-assertions-mc-torsion}.

  The ring homomorphism $\Z[G]\to \Lambda$ is a composition of two homomorphisms
  of the types discussed at the end of Section \ref{subsec:Groth-group-rings},
  so that it induces a group homomorphism $\G_0(\Z[G])\to \G_0(\Lambda)$.
  By Proposition \ref{prop:omega-3-over-max-order-implies-conj:main}, the hypotheses
  of the theorem imply that we have the equality
  \[
    [\Lambda\otimes_{\Z[G]} \Ar_F] = [\Lambda\otimes_{\Z[G]}\Z^{S_\infty}]-[\Lambda\otimes_{\Z[G]}\Z]-\involGrothen[\Lambda\otimes_{\Z[G]}\mu_F]
  \]
  in $\G_0(\Lambda)$, where $\involGrothen$ is the automorphism of $\G_0(\Lambda)$
  induced by the involution $g\mapsto g^{-1}$ on $G$. Write temporarily $\Z'=\Z[\tfrac{1}{2\cdot \#G}]$.
  Since the element $\tfrac{1}{\#G}\sum_{g\in G}g$ of $\Z'[G]$ acts
  trivially on the $\Z'[G]$-module $\Z'$, but is $0$ in the quotient $\Lambda$,
  we have $[\Lambda\otimes_{\Z[G]}\Z]=0$.

  Next, the hypothesis on roots of unity implies that for all prime numbers
  $p$ not dividing $2\cdot \#G$ and all $k\in \Z_{\geq 0}$, every $p^k$-th
  root of unity in $F$ is in $K$. This implies that $G$ acts trivially on
  $\Z'\otimes_{\Z}\mu_F$, hence, by the same argument as above, we have
  $[\Lambda\otimes_{\Z[G]}\mu_F]=0$. This completes the proof.
\end{proof}

\section{Some known cases of the conjectures}\label{sec:known}\noindent
In the present section we collect some situations in which Conjecture
\ref{conj:main-for-CL} is known. The main result of the section is Theorem
\ref{thm:main5}.

\begin{definition}\label{def:C-chi}
For a complex irreducible character $\chi$ of a finite group $G$, 
let $\Q(\chi)$ denote the field generated by the values of $\chi$, and
let $C(\chi)$ be the narrow class group of $\Q(\chi)$ if $\chi$ is symplectic,
and the usual ideal class group of $\Q(\chi)$ otherwise. 
\end{definition}

\begin{theorem}\label{thm:main5}
Let $F/K$ be a finite Galois extension of number fields, let $G$ be the Galois
group, let $G'$ be its commutator subgroup, and let $P$ be a set of prime
numbers not dividing $2\cdot\#G$. 
Suppose that $F^{G'}\!\!/\Q$ is abelian, and that for every complex irreducible character
$\chi$ of $G$ satisfying $\chi(1)>1$, the group $C(\chi)$ is generated by the
classes of the non-zero prime ideals of $\cO_{\Q(\chi)}$ not dividing any element
of $P$. Then Conjecture \ref{conj:main-for-CL} holds for $F/K$ and $P$.
\end{theorem}\noindent
The rest of the section is devoted to the proof of Theorem \ref{thm:main5} and some consequences.
\begin{theorem}\label{thm:omega3-in-abs-abelian-setting}
Let $F/K$ be a finite Galois extension of number fields and let $G$ be the Galois group. 
Suppose that $F/\Q$ is abelian. Then we have $\Omega(F/K,3)=W_{F/K}=0$ in $\Cl(\Z[G])$.
\end{theorem}

\begin{proof}
This is a special case of \cite[Corollary 1.4]{MR2290586}.
\end{proof}

\begin{corollary}\label{cor:abelian}
  Let $F/K$ be a finite Galois extension of number fields such that $F/\Q$
  is abelian. Then Conjecture \ref{conj:main} holds for $F/K$.
\end{corollary}

\begin{proof}
The assertion immediately follows from
Theorem \ref{thm:omega3-in-abs-abelian-setting} and Proposition
\ref{prop:omega-3-over-max-order-implies-conj:main}.
\end{proof}\noindent
If $G$ is a finite group, we denote by $\Irr_{\na}(G)$ the set of complex
irreducible characters of $G$ of degree greater than $1$, and
for $\chi$, $\chi'\in \Irr_{\na}(G)$ we
write $\chi\sim\chi'$ if $\chi$ and $\chi'$ are in the same Galois orbit, i.e.,
if there exists $\tau\in \Gal(\bar{\Q}/\Q)$ such that $\chi=\tau\circ\chi'$.

\begin{lemma}\label{lem:decomp-ZP[G]}
Let $G$ be a finite group and let $G'$ be its commutator subgroup. 
Then there is a direct product decomposition of $\Q$-algebras
\begin{equation}\label{eq:Q[G]-decomp}
\Q[G]\cong \Q[G/G'] \times \prod_{\chi\in \Irr_{\na}(G)/\sim}A_\chi, 
\end{equation}
where the product is taken over a full set of representatives of Galois
orbits of non-abelian characters of $G$, and each $A_\chi$ is a simple
$\Q$-algebra. 
Moreover, for a set $P$ of prime numbers not dividing $\# G$, there
is a corresponding direct product decomposition
\begin{equation}\label{eq:ZP[G]-decomp}
\Z_{(P)}[G] \cong \Z_{(P)}[G/G'] \times \prod_{\chi\in \Irr_{\na}(G)/\sim}R_\chi, 
\end{equation}
where each $R_{\chi}$ is a maximal $\Z_{(P)}$-order in $A_{\chi}$.
\end{lemma}

\begin{proof}
The decomposition \eqref{eq:Q[G]-decomp} is standard. 
By \cite[Proposition (27.1)]{MR632548}, the $\Z_{(P)}$-order $\Z_{(P)}[G]$ is
maximal in $\Q[G]$, so the decomposition \eqref{eq:ZP[G]-decomp}
follows from \cite[Theorem (10.5)]{MR0393100}.
\end{proof}

\begin{lemma}\label{lem:G0-tors-for-projections-onto-simple-components}
Let $G$ be a finite group, let $P$ be a set of prime numbers, let $\chi$ be a
complex irreducible character of $G$, let $A_{\chi}$ be the simple quotient
of $\Q[G]$ corresponding to the Galois orbit of $\chi$, let $R_{\chi}$ be the
image of $\Z_{(P)}[G]$ in $A_{\chi}$, and let $\Q(\chi)$ and $C(\chi)$ be as
in Definition \ref{def:C-chi}.
Then $\G_{0}(R_{\chi})_{\tors}$ is isomorphic to the quotient of $C(\chi)$ 
by the subgroup generated by the non-zero prime ideals of $\cO_{\Q(\chi)}$
not dividing any element of $P$.
\end{lemma}

\begin{proof}
This follows immediately from \cite[Theorem (38.67)]{MR892316}. 
\end{proof}
\begin{proof}[Proof of Theorem \ref{thm:main5}]
For an element $m$ of $\G_0(\Z[G])$, let $\Z_{(P)}\otimes m$ denote the
image of $m$ in $\G_0(\Z_{(P)}[G])$. Then Lemma
\ref{lem:equiv-main-conj-and-main-conj-Chinburg-style} implies that we have
$\Z_{(P)}\otimes \Psi(F/K)\in \G_0(\Z_{(P)}[G])_{\tors}$, and that
the claim is equivalent to the assertion that $\Z_{(P)}\otimes \Psi(F/K)=0$.
Moreover, by Lemma \ref{lem:decomp-ZP[G]}, there is a direct sum decomposition
of abelian groups
\[
\G_0(\Z_{(P)}[G])_{\tors}\cong\G_0(\Z_{(P)}[G/G'])_{\tors} \oplus \bigoplus_{\chi\in \Irr_{\na}(G)/\sim}\G_0(R_\chi)_{\tors}.
\]
Therefore the assertion that $\Z_{(P)}\otimes \Psi(F/K)=0$ is in turn equivalent
to the claim that the image of $\Psi(F/K)$ in each of the above summands is $0$.

Since $P$ does not contain any prime divisors of $\#G'$, it is
easily seen that the image of $\Psi(F/K)$ in $\G_0(\Z_{(P)}[G/G'])_{\tors}$
is equal to the image of $\Psi(F^{G'}/K)$. Since $F^{G'}/\Q$ is abelian, that
image is $0$ by Corollary \ref{cor:abelian}.

By Lemma \ref{lem:G0-tors-for-projections-onto-simple-components}, the hypotheses
of the theorem imply that $\G_0(R_\chi)_{\tors}$ is trivial for every
$\chi\in \Irr_{\na}(G)$, so the result follows.
\end{proof}\noindent
We conclude the section with a summary of what we know about Conjecture
\ref{conj:main-for-CL} for small groups.
\begin{proposition}\label{prop:112}
Let $F/\Q$ be a Galois extension of degree less than $112$, let $G$ be the Galois
group, and let $P$ be a set of prime numbers not dividing $2\cdot \#G$. Then 
Conjecture \ref{conj:main-for-CL} holds for $F/K$ and $P$, with $K=\Q$.
\end{proposition}
\begin{proof}
A direct computation, e.g. using the computational algebra
system \textsc{Magma} \cite{Wieb1997}, shows that the assumption that
$\#G<112$ implies that the hypotheses of Theorem \ref{thm:main5},
with $K=\Q$ and $P$ equal to the set of prime numbers not dividing $2\cdot \#G$,
are satisfied for $F/K$. A fortiori, the assumption is also satisfied
with $P$ being any smaller set.
\end{proof}

\begin{remark}
There are exactly two groups of order $112$ that have an irreducible
character $\chi$ of degree greater than $1$ such that, in the notation of the
proof of Theorem \ref{thm:main5}, the group
$\G_0(R_{\chi})_{\tors}$ is non-trivial. Each has exactly one Galois orbit of
such characters, in both cases of degree $2$. The two groups are both semidirect
products of a normal cyclic subgroup $C$ of order $56$ and a group
$H$ of order $2$. Let $x\in C$ be an element of order $7$, and let $y\in C$
be an element of order $8$. In one semidirect product, the non-trivial element
of $H$ acts on $C$ by $x\mapsto x^{-1}$ and $y\mapsto y^5$; and in the other
it acts on $C$ by $x\mapsto x^{-1}$ and $y\mapsto y^3$.
These are the two smallest Galois groups $G$ for which we do not currently know
Conjecture \ref{conj:main-for-CL} with $K=\Q$ and with $P$ being the set
of all prime numbers not dividing $2\cdot \#G$.
\end{remark}

\section{Counterexamples to the Cohen--Lenstra--Martinet heuristic}\label{sec:disproof}\noindent
In this section we prove Theorem \ref{thm:introdisproof} from the
introduction and explain how it disproves Heuristic \ref{he:orig}.
The following notation will remain in force throughout the section.
\begin{notation}\label{not:wreath}
  Let $p$ be an odd prime number, let $C_2$
  and $C_p$ be cyclic groups of orders $2$ and $p$, respectively, and let
  $G=C_2\wr C_p=C_2^p\rtimes C_p$, where $C_p$ acts on $C_2^p$ via its
  regular permutation action. 
  Let $\centre$ be the centre of $G$, which is
  cyclic of order $2$, and let $\gamma\in Z$ be the unique non-trivial
  central element. Let $P$ be the set of all prime numbers not dividing $2p$, and
  let $\Lambda$ be the quotient of $\Z_{(P)}[G]$ by the ideal generated by $1+\gamma$.
  Let $\cF$ be the family of all pairs $(F,i)$, where $F$ is a Galois number
  field satisfying $\mu_F=\{\pm 1\}$, and $i$ is an isomorphism from $G$ to the
  Galois group of $F$ sending $\centre$ to the inertia group of every Archimedean place.
\end{notation}

\begin{proposition}\label{prop:C2wreathCp}
All complex irreducible characters $\chi$ of $G$ with $\chi(1)>1$ satisfy $\Q(\chi)=\Q$.
\end{proposition}

\begin{proof}
Let $\chi$ be a complex irreducible character of $G$ with $\chi(1)>1$.
Then by \cite[\S 8.2]{Serre} we have 
$\chi=\Ind_{C_2^p}^G\psi$ for some irreducible character $\psi$ of $C_2^p$. 
Every such character $\psi$
satisfies $\Q(\psi)=\Q$, so we also have $\Q(\chi)=\Q$.
\end{proof}

\begin{corollary}\label{cor:C2wreathCp}
  Let $K=\Q$, and let $F$ be a Galois extension of $\Q$ with Galois group isomorphic to $G$.
Then Conjecture \ref{conj:main-for-CL} holds for $F/K$ and $P$. 
\end{corollary}

\begin{proof}
This follows from Proposition \ref{prop:C2wreathCp} and Theorem \ref{thm:main5}. 
\end{proof}

\begin{lemma}\label{lem:CMC2wreathCp}
There exist, up to isomorphism, infinitely many Galois number fields $F$ with Galois group isomorphic
to $G$ such that the inertia groups at infinity map to $\centre$ and
such that $\mu_{F}=\{\pm 1\}$.
\end{lemma}

\begin{proof}
Let $L/\Q$ be a cyclic extension of degree $p$ and let $H$ be the Galois group.
Since $[L:\Q]$ is odd and $L/\Q$ is Galois, $L$ must be totally real.
Let $l$ be a prime number that splits completely in $L$, and let $\fl$ be a
place of $L$ above $l$. By weak approximation,
there exists $a\in L^\times$ such that $a$ is totally negative, has
$\fl$-adic normalised valuation $1$, and for all $\sigma\in H\setminus\{1\}$
has $\sigma(\fl)$-adic valuation $0$. 
In particular, $a$ has the property that
for every non-empty subset $\Sigma\subseteq H$
the product $\prod_{\sigma\in \Sigma} \sigma(a)$ is not a square in $L^{\times}$.
The Galois closure $F=L(\{\sqrt{\sigma(a)} : \sigma\in H \})$
of $L(\sqrt{a})$ then has Galois group isomorphic to $G$ such that the inertia groups
at infinity map to $\centre$. 
Moreover, the maximal abelian extension inside $F$ is $F^{\ab}=
L(\sqrt{\Norm(a)})$ where $\Norm(a)=\prod_{\sigma\in H}  \sigma(a) \in \Q^{\times}$,
which has $l$-adic valuation $1$.
Thus $F^{\ab}/\Q$ is ramified at $l$, so as $l$ varies, we obtain
infinitely many extensions $F$. Of these, only finitely many can contain
a non-trivial cyclotomic field, which completes the proof.
\end{proof}\noindent
\begin{proposition}\label{prop:CM-extns}
For all $(F,i)\in \cF$ the class of $\Lambda \otimes_{\Z[G]}\Cl_F$ in $\G_{0}(\Lambda)$ is trivial.
\end{proposition}

\begin{proof}
Let $(F,i)\in \cF$. Under the hypotheses the extension $F/F^{\centre}$ is a
totally imaginary quadratic extension of a totally real field, and $\mu_{F} = \{ \pm 1 \}$,
which imply that $\Lambda\otimes_{\Z[G]} \cO_F^\times$ is trivial. Now Corollary
\ref{cor:C2wreathCp} implies that Conjecture \ref{conj:main-for-CL} holds in for
$F/K$ and $P$, and thus one has
$[\Z_{(P)}\otimes_{\Z}\Ar_F]  = [(\Z_{(P)})^{S_\infty}] - [\Z_{(P)}]$ in
$\G_{0}(\Z_{(P)}[G])$. Since $\gamma$ acts trivially on the two terms on the right,
this implies that $[\Lambda \otimes_{\Z[G]} \Ar_{F}]$ is trivial in $\G_{0}(\Lambda)$.
By the first equality of Proposition \ref{prop:Ara} we have
\[
[\Z_{(P)} \otimes_{\Z} \Ar_F]
=\involGrothen[\Z_{(P)} \otimes_{\Z} \cO_{F}^{\times}]-\involGrothen[\Z_{(P)} \otimes_{\Z} \Cl_{F}]
\]
in $\G_{0}(\Z_{(P)}[G])$. 
Since $\involGrothen$ is the automorphism of $\G_{0}(\Z_{(P)}[G])$ induced by the involution
$\sigma\mapsto \sigma^{-1}$ on $\Z[G]$, and since we have $\gamma^{-1}=\gamma$, 
we conclude that 
\[
  [\Lambda \otimes_{\Z[G]} \Cl_{F}] = [\Lambda \otimes_{\Z[G]} \cO_{F}^{\times}] = 0  
\]
in $\G_{0}(\Lambda)$, as desired.
\end{proof}\noindent
\begin{lemma}\label{lem:family-of-non-trivial-class-groups}
Let $p > 19$ be a prime number satisfying $p\equiv \pm3 \bmod 8$ and let $\zeta_{p}$ 
denote a primitive $p$-th root of unity. 
Then $\Cl(\Z[\zeta_{p}, \frac{1}{2p}])$ is non-trivial. 
\end{lemma}

\begin{proof}
Let $C$ be the class group of $\Q(\zeta_p)$ and let $C^{+}$ be the class group of the maximal
real subfield $\Q(\zeta_p)^{+}$. By \cite[Theorem 4.14]{MR1421575}, the natural
map $C^{+} \rightarrow C$ is an injection, and so we can and do view $C^{+}$ as
a subgroup of $C$. By \cite[Theorem 2.13]{MR1421575}, $2$ splits into $(p-1)/f$
distinct primes in $\Q(\zeta_{p})$, where $f$ is the multiplicative order of $2 \bmod p$.
The condition $p\equiv \pm3 \bmod 8$ is equivalent to $2$ being a quadratic non-residue
$\bmod$ $p$, and thus $f$ must be even. Since $\Q(\zeta_p)/\Q$ is cyclic, we conclude
that any prime of $\Q(\zeta_p)^+$ lying above 2 must be inert in $\Q(\zeta_p)/\Q(\zeta_p)^+$. 
Thus the class in $C$ of any prime of $\Q(\zeta_p)$ above $2$ must in fact lie in $C^+$.
Since $p>19$, we have that $\#C/\#(C^{+}) > 1$ by \cite[Corollary 11.18]{MR1421575}. 
Moreover, the unique prime of $\Q(\zeta_{p})$ above $p$ is principal (generated by $1-\zeta_{p})$.
Therefore the quotient of $C$ by the subgroup generated by the classes of primes above $2$ and $p$ 
is non-trivial, which gives the desired result.
\end{proof}

\begin{proposition}\label{prop:non-trivial-G0-tors-of-Lambda'}
Let $p>19$ be a prime number satisfying $p\equiv \pm3 \bmod 8$.
Then $\G_{0}(\Lambda)_{\tors}$ is non-trivial.
\end{proposition}

\begin{proof}
Since $G/G'$ is cyclic of order $2p$, we have that 
\[
\textstyle{\Z_{(P)}[G/G'] \cong \Z[\frac{1}{2p}] \times \Z[\zeta_{p}, \frac{1}{2p}] \times \Z[\frac{1}{2p}] \times \Z[\zeta_{p}, \frac{1}{2p}]}.
\]
Let $\Lambda'$ be the image of $\Lambda$ under the projection $\Z_{(P)}[G] \rightarrow \Z_{(P)}[G/G']$.
Then $\Lambda'$ is a direct factor of $\Lambda$ and $\Lambda' \cong \Z[\frac{1}{2p}] \times \Z[\zeta_{p}, \frac{1}{2p}]$.
Thus $\G_{0}(\Lambda')_{\tors}$ is a direct summand of $\G_{0}(\Lambda)_{\tors}$ and 
$\G_{0}(\Lambda')_{\tors} \cong \Cl(\Z[\zeta_{p}, \frac{1}{2p}])$ by 
\cite[Theorem (38.67)]{MR892316}.
Therefore the result follows from Lemma \ref{lem:family-of-non-trivial-class-groups}.
\end{proof}\noindent
\begin{remark}
  The conclusions of Lemma \ref{lem:family-of-non-trivial-class-groups} and
  Proposition \ref{prop:non-trivial-G0-tors-of-Lambda'} hold
  under the weaker assumption on $p$ that $p>19$ be a prime number such that
  $2$ generates a group of even cardinality in $(\Z/p\Z)^\times$, and the proofs
  carry over almost verbatim. The set of all such primes has been investigated,
  and Hasse \cite{Hasse} has computed its Dirichlet density to be $17/24$.
\end{remark}
\begin{proof}[Proof of Theorem \ref{thm:introdisproof}]
  Let $p$ be a prime number satisfying the hypothesis of Proposition
  \ref{prop:non-trivial-G0-tors-of-Lambda'}, and let $G$ and $\Lambda$ be as
  in Notation \ref{not:wreath}. Then $\G_0(\Lambda)_{\tors}$ is non-trivial. 
  The family $\cF$ as in Notation \ref{not:wreath} is infinite by Lemma \ref{lem:CMC2wreathCp}, and
  for all $(F,i)\in \cF$ the class of $\Lambda\otimes_{\Z[G]}\Cl_F$ in
  $\G_0(\Lambda)$ is trivial by Proposition \ref{prop:CM-extns}.
\end{proof}\noindent
Finally, the next result shows that Heuristic \ref{he:orig} is false for
$\cF$ and $\Lambda$ as above and for some natural functions $f$.
\begin{proposition}\label{prop:CLM-equidistr}
Suppose that for
all homomorphisms $\phi\colon \G_{0}(\Lambda)_{\tors}\to \C^\times$,
Heuristic \ref{he:orig}
holds with $K=\Q$ and with $f$ being the function that assigns to a finite
$\Lambda$-module $M$ the value of $\phi$ on the class of $M$ in
$\G_{0}(\Lambda)_{\tors}$. Then as $(F,i)$ ranges over $\cF$, the class of
$\Lambda\otimes_{\Z[G]}\Cl_F$ in $\G_{0}(\Lambda)_{\tors}$ is equidistributed.
\end{proposition}

\begin{proof}
By Lemma \ref{lem:decomp-ZP[G]} there is a direct product decomposition
\[
  \Z_{(P)}[G] \cong \Z_{(P)}[G/G'] \times \prod_{\chi\in \Irr_{\na}(G)/\sim}R_\chi.
\]
Note that the group $G/G'$ is cyclic of order $2p$.
Let $\bar{\gamma}$ be the image of $\gamma$ under the projection map $G \rightarrow G/G'$,
and let $\bar{\Lambda}$ be the quotient of $\Z_{(P)}[G/G']$ by the two-sided
ideal generated by $1+\bar{\gamma}$. Then $\Lambda \cong \bar{\Lambda} \times T$,
where $T$ is a direct factor of $\prod_{\chi\in \Irr_{\na}(G)/\sim}R_\chi$.
Moreover, for $\chi\in \Irr_{\na}(G)$, we have that $\Q(\chi)=\Q$ by
Proposition \ref{prop:C2wreathCp}, and thus $\G_{0}(R_{\chi})_{\tors}$ is
trivial by Lemma \ref{lem:G0-tors-for-projections-onto-simple-components}.
Hence there is a canonical isomorphism
$\G_{0}(\Lambda)_{\tors} \cong \G_{0}(\bar{\Lambda})_{\tors}$.
The set $\cF$ is infinite by Lemma \ref{lem:CMC2wreathCp}.
Note that $F^{G'}$ is an imaginary abelian number field that is a quadratic
extension of a real field, and $\bar{\Lambda}\otimes_{\Z[G]}\Cl_F$ is the
maximal quotient of $\Z_{(P)}\otimes_{\Z}\Cl_{F^{G'}}$ on which complex
conjugation acts by $-1$. Equidistribution of minus parts of class groups
in the corresponding Grothendieck group was shown in \cite[Proposition 4.4]{MR4105790}
to follow from the Cohen--Lenstra--Martinet heuristic for families of
imaginary extensions \emph{with Galois group isomorphic to $G/G'$}.
However, it is easy to see that the prediction is the same in the
present situation, despite the fields being ordered differently; see Remark \ref{rmrk:expvaluequo}.
\end{proof}

\bibliography{OrientedArakelov}

\providecommand{\bysame}{\leavevmode\hbox to3em{\hrulefill}\thinspace}
\providecommand{\MR}{\relax\ifhmode\unskip\space\fi MR }
\providecommand{\MRhref}[2]{%
  \href{http://www.ams.org/mathscinet-getitem?mr=#1}{#2}
}
\providecommand{\href}[2]{#2}
\begin{thebibliography}{CNCFT91}

\bibitem[BCP97]{Wieb1997}
W.~Bosma, J.~Cannon, and C.~Playoust, \emph{The {M}agma algebra system. {I}.
  {T}he user language}, J. Symbolic Comput. \textbf{24} (1997), no.~3-4,
  235--265, Computational algebra and number theory (London, 1993).

\bibitem[BF06]{MR2290586}
D.~Burns and M.~Flach, \emph{On the equivariant {T}amagawa number conjecture
  for {T}ate motives. {II}}, Doc. Math. (2006), no.~Extra Vol., 133--163.
  \MR{2290586}

\bibitem[BLJ17]{MR3705226}
A.~Bartel and H.~W. Lenstra~Jr., \emph{Commensurability of automorphism
  groups}, Compos. Math. \textbf{153} (2017), no.~2, 323--346. \MR{3705226}

\bibitem[BLJ20]{MR4105790}
\bysame, \emph{On class groups of random number fields}, Proc. Lond. Math. Soc.
  (3) \textbf{121} (2020), no.~4, 927--953. \MR{4105790}

\bibitem[Chi83]{MR724009}
T.~Chinburg, \emph{On the {G}alois structure of algebraic integers and
  {$S$}-units}, Invent. Math. \textbf{74} (1983), no.~3, 321--349. \MR{724009}

\bibitem[Chi85]{MR786352}
\bysame, \emph{Exact sequences and {G}alois module structure}, Ann. of Math.
  (2) \textbf{121} (1985), no.~2, 351--376. \MR{786352}

\bibitem[CKPS98]{MR1641555}
T.~Chinburg, M.~Kolster, G.~Pappas, and V.~Snaith, \emph{Galois structure of
  {$K$}-groups of rings of integers}, $K$-Theory \textbf{14} (1998), no.~4,
  319--369. \MR{1641555}

\bibitem[CLJ84]{MR756082}
H.~Cohen and H.~W. Lenstra~Jr., \emph{Heuristics on class groups of number
  fields}, Number theory, {N}oordwijkerhout 1983 ({N}oordwijkerhout, 1983),
  Lecture Notes in Math., vol. 1068, Springer, Berlin, 1984, pp.~33--62.
  \MR{756082}

\bibitem[CM87]{CMII}
H.~Cohen and J.~Martinet, \emph{{C}lass {G}roups of {N}umber {F}ields:
  {N}umerical {H}euristics}, Math. Comp. \textbf{48} (1987), no.~177, 123--137.

\bibitem[CM90]{MR1037430}
\bysame, \emph{\'{E}tude heuristique des groupes de classes des corps de
  nombres}, J. Reine Angew. Math. \textbf{404} (1990), 39--76. \MR{1037430}

\bibitem[CNCFT91]{MR1110391}
Ph. Cassou-Nogu{\`e}s, T.~Chinburg, A.~Fr{\"o}hlich, and M.~J. Taylor,
  \emph{{$L$}-functions and {G}alois modules}, {$L$}-functions and arithmetic
  ({D}urham, 1989), London Math. Soc. Lecture Note Ser., vol. 153, Cambridge
  Univ. Press, Cambridge, 1991, Based on notes by D. Burns and N. P. Byott,
  pp.~75--139. \MR{1110391}

\bibitem[CR81]{MR632548}
C.~W. Curtis and I.~Reiner, \emph{Methods of representation theory. {V}ol.
  {I}}, John Wiley \& Sons, Inc., New York, 1981. \MR{632548}

\bibitem[CR87]{MR892316}
\bysame, \emph{Methods of representation theory. {V}ol. {II}}, Pure and Applied
  Mathematics (New York), John Wiley \& Sons, Inc., New York, 1987. \MR{892316}

\bibitem[Fr{\"{o}}78]{MR507603}
A.~Fr{\"{o}}hlich, \emph{Some problems of {G}alois module structure for wild
  extensions}, Proc. London Math. Soc. (3) \textbf{37} (1978), no.~2, 193--212.
  \MR{507603}

\bibitem[Has66]{Hasse}
H.~Hasse, \emph{\"{U}ber die {D}ichte der {P}rimzahlen {$p$}, f\"{u}r die eine
  vorgegebene ganzrationale {Z}ahl {$a\not=0$} von gerader bzw. ungerader
  {O}rdnung {${\rm mod}.p$} ist}, Math. Ann. \textbf{166} (1966), 19--23.
  \MR{205975}

\bibitem[HTW88]{MR953163}
I.~Hambleton, L.~R. Taylor, and E.~B. Williams, \emph{On {$G_n(RG)$} for {$G$}
  a finite nilpotent group}, J. Algebra \textbf{116} (1988), no.~2, 466--470.
  \MR{953163}

\bibitem[Lan94]{MR1282723}
S.~Lang, \emph{Algebraic number theory}, second ed., Graduate Texts in
  Mathematics, vol. 110, Springer-Verlag, New York, 1994. \MR{1282723}

\bibitem[Len81]{MR601683}
H.~W. Lenstra, Jr., \emph{Grothendieck groups of abelian group rings}, J. Pure
  Appl. Algebra \textbf{20} (1981), no.~2, 173--193. \MR{601683}

\bibitem[NSW08]{MR2392026}
J.~Neukirch, A.~Schmidt, and K.~Wingberg, \emph{Cohomology of number fields},
  second ed., Grundlehren der Mathematischen Wissenschaften [Fundamental
  Principles of Mathematical Sciences], vol. 323, Springer-Verlag, Berlin,
  2008. \MR{2392026}

\bibitem[Que85]{MR769765}
J.~Queyrut, \emph{Anneaux d'entiers dans le m\^{e}me genre}, Illinois J. Math.
  \textbf{29} (1985), no.~1, 157--179. \MR{769765}

\bibitem[Rei75]{MR0393100}
I.~Reiner, \emph{Maximal orders}, Academic Press [A subsidiary of Harcourt
  Brace Jovanovich, Publishers], London-New York, 1975, London Mathematical
  Society Monographs, No. 5. \MR{0393100}

\bibitem[Sch08]{Schoof}
R.~Schoof, \emph{Computing {A}rakelov class groups}, Algorithmic number theory:
  lattices, number fields, curves and cryptography, Math. Sci. Res. Inst.
  Publ., vol.~44, Cambridge Univ. Press, Cambridge, 2008, pp.~447--495.
  \MR{2467554}

\bibitem[Ser77]{Serre}
J.-P. Serre, \emph{Linear representations of finite groups}, Springer-Verlag,
  New York-Heidelberg, 1977, Translated from the second French edition by
  Leonard L. Scott, Graduate Texts in Mathematics, Vol. 42. \MR{0450380}

\bibitem[Tat66]{MR0207680}
J.~Tate, \emph{The cohomology groups of tori in finite {G}alois extensions of
  number fields}, Nagoya Math. J. \textbf{27} (1966), 709--719. \MR{0207680}

\bibitem[Tat84]{MR782485}
\bysame, \emph{Les conjectures de {S}tark sur les fonctions {$L$} d'{A}rtin en
  {$s=0$}}, Progress in Mathematics, vol.~47, Birkh\"{a}user Boston, Inc.,
  Boston, MA, 1984, Lecture notes edited by Dominique Bernardi and Norbert
  Schappacher. \MR{782485}

\bibitem[Ull77]{MR2905910}
S.~V. Ullom, \emph{A survey of class groups of integral group rings}, Algebraic
  number fields: {$L$}-functions and {G}alois properties, Academic Press,
  London, 1977, pp.~497--524. \MR{2905910}

\bibitem[Was97]{MR1421575}
L.~C. Washington, \emph{Introduction to cyclotomic fields}, second ed.,
  Graduate Texts in Mathematics, vol.~83, Springer-Verlag, New York, 1997.
  \MR{1421575}

\end{thebibliography}
\bibliographystyle{amsalpha}

\end{document}